\newcommand{\RM}{\operatorname{Re}}
\nonstopmode \numberwithin{equation}{section}
 \numberwithin{figure}{section}
\newtheorem{definition}{Definition}[section]
\newtheorem{theorem}{Theorem}[section]
 \newtheorem{corollary}{Corollary}[section]
\newtheorem{lemma}{Lemma}[section]
\newtheorem{remark}{Remark}[section]
\begin{document}

\title[Bessel-Struve kernel function]{One some differential subordination involving the Bessel-Struve kernel function}

\author[Saiful R. Mondal]{Saiful R. Mondal}
\address{ Department of Mathematics,
King Faisal University, Ahsaa 31982, Saudi Arabia }
\email{smondal@kfu.edu.sa}
\author[Mohammed Al-Dhuain]{Mohammed Al-Dhuain}
\address{Department of Mathematics,
King Faisal University, Ahsaa , Saudi Arabia}
\email{albhishi@hotmail.com.com}
\thanks{$\ast$ First author thanks the  Deanship of Scientific Research at King Faisal University for funding this work under project number 150244.}

\subjclass[2010]{30C45, 33C10, 30C80, 40G05.}
\keywords{Bessel functions, Struve functions, Bessel-Struve kernel, Starlike, Close-to-convex}
\maketitle

\begin{abstract}
In this article we study the inclusion properties of the Bessel-Struve kernel functions in the Janowski class. In particular, we find the conditions for which  the Bessel-Struve kernel functions maps the unit disk to right half plane. Some open problem in this aspect are also given. The third order differential subordination involving the Bessel-Struve kernel  is also considered.  The  results are derived by defining suitable classes of admissible functions. One of the recurrence relation of the Bessel-Struve kernel play an important role to derive the main results.
\end{abstract}

\section{Introduction and Preliminaries}
\subsection{Bessel-Struve Kernel functions}
Consider the Bessel-Struve kernel function $\mathtt{S}_{\alpha, \lambda}$ defined on  the unit disk ${\Delta}=\{z: |z|<1\}$ as
\begin{align}
\mathtt{S}_{\alpha, \lambda}( z):= j_\alpha(i \lambda z)- i h_\alpha(i \lambda z), \quad \alpha> -\frac{1}{2}, \quad \lambda \in \mathbb{C},
\end{align}
where, $j_\alpha(z):= 2^\alpha z^{-\alpha} \Gamma{(\alpha+1)} \mathtt{J}_\alpha(z)$ and $h_\alpha(z):= 2^\alpha z^{-\alpha} \Gamma{(\alpha+1)} \mathtt{H}_\alpha(z)$ are respectively known as the normalized Bessel functions and the normalized Struve functions of first kind of index $\alpha$. The Bessel-Struve transformation and Bessel-Struve kernel functions  are appeared in many article. In \cite{Hamem}, Hamem et. al. studies an analogue of the Cowling-Price theorem for the Bessel-Struve transform defined on real domain and also provide Hardyâ's type theorem associated with this transform. The Bessel-Struve intertwining operator on $\mathbb{C}$ is considered in \cite{Gasmi}. The fock space of the Bessel-Struve kernel functions is discussed in \cite{Gasmi-Soltani}.

The kernel $z \mapsto \mathtt{S}_{\alpha, \lambda}( z)$, $\lambda \in \mathbb{C}$ is the unique solution of the initial value problem
\begin{align}\label{eqn:Bessel-struve-opt}
\mathcal{L}_\alpha u(z)= \lambda^2 u(z), \quad u(0)=1, u'(0)=\frac{\lambda \Gamma{(\alpha+1)}}{\sqrt{\pi}\Gamma{(\alpha+\frac{3}{2}})}.
\end{align}
Here $\mathcal{L}_\alpha$ , $\alpha >-1/2$ is the Bessel-Struve operator given by
\begin{align}\label{eqn:Bessel-struve-opt-1}
\mathcal{L}_\alpha(u(z)):= \frac{d^2u}{dz^2}(z)+\frac{2\alpha+1}{z}\left(\frac{du}{dz}(z)-\frac{du}{dz}(0)\right).
\end{align}

Now, the Bessel functions and the Struve functions of order $\alpha$ respectively  have the power series
\begin{align*}
J_\alpha(z)= \sum_{n=0}^\infty \frac{(-1)^n\left(\frac{z}{2}\right)^{2n+\alpha}}{n! \Gamma{(\alpha+n+1)} } \quad \text{and} \quad
\mathtt{H}_\alpha(z):=  \sum_{n=0}^\infty \frac{(-1)^n \left(\frac{z}{2}\right)^{2n+\alpha+1}}{\mathrm{\Gamma}{\left(n +\alpha+\frac{3}{2}\right)} \mathrm{\Gamma}{\left(n+\frac{3}{2}\right)}}.
\end{align*}
This implies that  $\mathtt{S}_{\alpha, \lambda}$   possesses the power series
\begin{align}\label{eqn:B-S-power}
\mathtt{S}_{\alpha, \lambda}( z):= \sum_{n=0}^\infty \frac{ {\Gamma(\alpha+1)}\Gamma{\left(\frac{n+1}{2}\right)}}{\sqrt{\pi} n! \Gamma\left(\frac{n}{2}+\alpha+1\right)} \lambda^n z^n.
\end{align}
The kernel $\mathtt{S}_{\alpha, \lambda}$ also have the integral representation
\begin{align}\label{kernel-intgra-rep}
\mathtt{S}_{\alpha, \lambda}( z):=\frac{ 2 \Gamma{(\alpha+1)}}{\sqrt{\pi}\Gamma{\left(\alpha+\frac{1}{2}\right)}}\int_{0}^1 (1-t^2)^{\alpha-\frac{1}{2}} e^{\lambda zt} dt.
\end{align}
Now from \eqref{eqn:Bessel-struve-opt} and \eqref{eqn:Bessel-struve-opt-1} it is evident that $\mathtt{S}_\alpha$ satisfy the differential equation
\begin{align}\label{eqn:kumar-hypr-ode}
z^2 \mathtt{U}''( z)+(2\alpha+1)z\mathtt{U}'(z)-z\lambda^2\mathtt{U}(z)= z M ,
\end{align}
where $M=2 \lambda\Gamma(\alpha+1)\left(\sqrt{\pi}\;\Gamma(\alpha +\frac{1}{2})\right)^{-1}$.

It can be shown that  $\mathtt{S}_{\alpha, 1}=\mathtt{S}_{\alpha}$  satisfy the recurrence relation
\begin{align}\label{eqn:Bessel-struve-rec}
z \mathtt{S}'_\alpha(z)= 2\alpha \mathtt{S}_{\alpha-1}(z)-2 \alpha \mathtt{S}_\alpha(z).
\end{align}

\subsection{Differential subordinations}
 In this sections a details introduction about the classes of univalent functions theory,  admissible functions  and fundamental results about the differential subordination of different orders are given. The  differential subordinations and its application are  mainly encompassed  the first order and second order differential subordination till the introduction of the third order differential technique by  Antonino\ and\  Miller \cite{Antonino-Miller}.

Let $\mathcal{A}$ denote the class of analytic functions $f$ defined in the open unit disk $\Delta$ normalized by the conditions
 $f(0) = 0 = f'(0)-1$ and have the form
\begin{align}
f(z)= z+ \sum_{n=1}^\infty a_{n+1} z^{n+1}.
\end{align}
If $f$ and $g$ are  analytic
in $\Delta$, then $f$ is subordinate to $g$, written  $f(z) \prec g(z)$,  if there is an analytic self-map
$w$ of $\Delta$ satisfying $w(0)=0$  and $f = g \circ w$.
For $-1 \leq B < A \leq  1$, let $\mathcal{P}[A,B]$ be the class consisting of normalized analytic functions $p(z)= 1+ c_{1}z + \cdots$ in $\Delta$ satisfying
\[ p(z) \prec \frac{1+Az}{1+Bz}.\]
For instance, if $0 \leq \beta <1$, then $ \mathcal{P}[1-2 \beta, -1]$ is the class of functions $p(z)= 1+ c_{1}z + \cdots$ satisfying $ \RM p(z) > \beta$ in $\Delta$.

The class $\mathcal{S}^\ast[A, B]$ of Janowski starlike functions \cite{Janowski} consists of $ f \in \mathcal{A}$ satisfying
\[\frac{ z f'(z)}{f(z)}\in \mathcal{P}[A,B].\]
For $0 \leq \beta <1$, $ \mathcal{S}^\ast[1-2 \beta, -1]:=  \mathcal{S}^\ast(\beta)$ is the usual class of starlike functions of order $\beta$;
$\mathcal{S}^\ast[1- \beta, 0]:=  \mathcal{S}^\ast_{\beta} = \{f \in \mathcal{A} :
| z f'(z)/f(z) -1| < 1- \beta \}$, and $\mathcal{S}^\ast[\beta, - \beta]:=\mathcal{S}^\ast[\beta]= \{f \in \mathcal{A} :
| z f'(z)/f(z) -1| < \beta | z f'(z)/f(z) +1|\}$. These classes have been studied, for example, in
\cite{Ali-seeni-ijmms,Ali-Chandra-aml}. A function $f \in \mathcal{A}$ is said to be close-to-convex of order $\beta$ \cite{Miller-Mocanu-book,Goodman-book} if $\RM\left( zf'(z)/{g(z)}\right)> \beta$ for some $g \in \mathcal{S}^\ast :=\mathcal{S}^\ast(0)$.

Let $\mathcal{H}(\Delta)$ be the class of functions which are analytic in $\Delta$. For $n \in \mathbb{N}:= \{1, 2, 3, \ldots\}$ and $a \in \mathbb{C}$, consider the class $\mathcal{H}[a,n]$ defined as
 \begin{align}
\mathcal{H}[a,n]:=\{f \in \mathcal{H}(\Delta): f(z)= a + a_{n} z^{n} + a_{n+1} z^{n+1}+\ldots\},
\end{align}
and suppose that $\mathcal{H}_0=\mathcal{H}[0,1]$.

The theory of the differential subordination in $\Delta$ was introduced by Miller and Mocanu \cite{Miller-Mocanu-book} and
subsequently many researcher either apply this  concept to study the geometric properties of analytic functions defined on
$\Delta$ and developed or reproduced  several other theory for  subclasses of the univalent functions theory
(See \cite{Ali-2,Ali-3,Ali-4,Ali-5,Ali-6,Ali-1} and references their in).

Following result is important to show that the Bessel-Struve kernel have a relation with Janowski class.
\begin{lemma}\cite{miller-dif-sub,Miller-Mocanu-book}\label{lem:miller-mocanu-1}
 Let $\Omega \subset \mathbb{C}$, and $\Psi : \mathbb{C}^3 \times \Delta \to \mathbb{C}$ satisfy
 \[
 \Psi( i \rho , \sigma, \mu + i \nu; z) \not \in \Omega
 \]
 whenever $z \in \Delta$, $\rho$ real, $\sigma \leq -(1+\rho^2)/2$ and $ \sigma+\mu \leq 0$. If $p$ is analytic in $\Delta$ with $p(0)=1$, and
 $\Psi( p(z), z p'(z), z^2 p''(z); z) \in \Omega$ for $ z \in \Delta$, then $\RM  p(z) > 0$ in $\Delta$.
\end{lemma}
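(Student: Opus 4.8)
The plan is to argue by contradiction, reducing the statement to a single extremal boundary point and then reading off the admissibility data at that point from a Jack-type lemma. Since $p$ is analytic with $p(0)=1$, we have $\RM p(0)=1>0$, so the open set on which $\RM p>0$ contains a neighbourhood of the origin. Suppose, contrary to the claim, that $\RM p>0$ fails somewhere in $\Delta$; then
\[
r_0:=\sup\{r\in(0,1): \RM p(z)>0 \text{ for } |z|<r\}<1,
\]
and by continuity there is a point $z_0$ with $|z_0|=r_0$ at which $\RM p(z_0)=0$, that is $p(z_0)=i\rho$ for some real $\rho$, while $\RM p(z)>0$ for $|z|<r_0$.

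I would introduce the Cayley transform $w(z):=(p(z)-1)/(p(z)+1)$. On the closed disk $|z|\le r_0$ we have $\RM p\ge0$, hence $p\neq-1$ and $w$ is analytic there; moreover $w(0)=0$ and $\RM p(z)>0 \Leftrightarrow |w(z)|<1$. Consequently $|w(z)|<1$ for $|z|<r_0$ and $|w(z_0)|=1$, so $|w|$ attains its maximum over $\{|z|\le r_0\}$ at $z_0$. The key tool is then the second-order Jack/Clunie lemma: at such an extremal point there is a real constant $k\ge1$ with
\[
z_0 w'(z_0)=k\,w(z_0),\qquad \RM\!\left(1+\frac{z_0 w''(z_0)}{w'(z_0)}\right)\ge k.
\]

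From $p=(1+w)/(1-w)$ one computes $zp'=2zw'/(1-w)^2$ and $z^2p''=\bigl(2z^2w''(1-w)+4z^2(w')^2\bigr)/(1-w)^3$. Writing $w(z_0)=e^{i\theta}$ gives $p(z_0)=i\cot(\theta/2)$, so $\rho=\cot(\theta/2)$ and $1+\rho^2=\csc^2(\theta/2)$. Substituting $z_0w'(z_0)=ke^{i\theta}$ shows that $\sigma:=z_0p'(z_0)$ is real with
\[
\sigma=-\frac{k}{2\sin^2(\theta/2)}=-\frac{k}{2}\,(1+\rho^2)\le-\frac{1+\rho^2}{2},
\]
using $k\ge1$; this is the first admissibility inequality. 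Writing $z_0^2p''(z_0)=\mu+i\nu$ and carrying the same substitutions through the formula for $z^2p''$, the real part reduces to
\[
\sigma+\mu=\frac{k\bigl(k-1-\RM(z_0w''(z_0)/w'(z_0))\bigr)}{2\sin^2(\theta/2)}\le0,
\]
where the final inequality is exactly the second Jack condition $\RM(z_0w''(z_0)/w'(z_0))\ge k-1$ together with $k\ge1$. Thus the triple $(p(z_0),z_0p'(z_0),z_0^2p''(z_0))=(i\rho,\sigma,\mu+i\nu)$ meets both hypotheses of the admissibility condition, so $\Psi(i\rho,\sigma,\mu+i\nu;z_0)\notin\Omega$, contradicting $\Psi(p(z),zp'(z),z^2p''(z);z)\in\Omega$ for all $z\in\Delta$. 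Hence no such $z_0$ exists and $\RM p>0$ throughout $\Delta$.

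The main obstacle is concentrated in the second-order step. First, the proof genuinely needs the refined Jack lemma with its second-derivative inequality $\RM(1+z_0w''/w')\ge k$: the first-order relation $z_0w'=kw$ alone yields $\sigma\le-(1+\rho^2)/2$ but is powerless to control $\sigma+\mu$. Second, the bookkeeping in computing $\RM(z_0^2p''(z_0))$ is where sign errors are most likely, since one must reduce $(1-e^{i\theta})^{-3}$ and the mixed term $z^2(w')^2$ using $1-e^{i\theta}=-2i\sin(\theta/2)e^{i\theta/2}$ and check that the imaginary contributions cancel in the combination $\sigma+\mu$, leaving precisely the factor $k-1-\RM(z_0w''/w')$. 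Once that cancellation is verified, the argument closes at once.
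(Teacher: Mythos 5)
Your proof is correct, but there is nothing in the paper to compare it against: Lemma \ref{lem:miller-mocanu-1} is stated without proof, being quoted from Miller and Mocanu \cite{miller-dif-sub,Miller-Mocanu-book}. What you have written is essentially a reconstruction of the original Miller--Mocanu argument for half-plane admissibility: the contradiction setup (the critical radius $r_0$ and extremal point $z_0$ with $\RM p(z_0)=0$), the Cayley transform $w=(p-1)/(p+1)$ converting the half-plane condition into a maximum-modulus condition, and the appeal to the refined Jack--Miller--Mocanu lemma with its second-derivative inequality $\RM\left(1+z_0w''(z_0)/w'(z_0)\right)\ge k$ are exactly the ingredients of the classical proof. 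I checked the computation at the delicate step: writing $w(z_0)=e^{i\theta}$ and using $1-e^{i\theta}=-2i\sin(\theta/2)e^{i\theta/2}$, one gets $(1-e^{i\theta})^2=-4\sin^2(\theta/2)e^{i\theta}$ and $(1-e^{i\theta})^3=8i\sin^3(\theta/2)e^{3i\theta/2}$, whence $\sigma=z_0p'(z_0)=-k/(2\sin^2(\theta/2))$ is indeed real and
\[
\mu=\RM\bigl(z_0^2p''(z_0)\bigr)=\frac{k\bigl(k-\RM\bigl(z_0w''(z_0)/w'(z_0)\bigr)\bigr)}{2\sin^2(\theta/2)},
\]
so $\sigma+\mu$ carries precisely the factor $k-1-\RM\bigl(z_0w''(z_0)/w'(z_0)\bigr)\le 0$ that you claim; both admissibility inequalities then hold at $z_0\in\Delta$, giving the contradiction. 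Three small points you use tacitly and could make explicit: $w(z_0)\neq 1$ (equivalently $\sin(\theta/2)\neq 0$), which holds because $p$ is analytic, hence finite, at $z_0$; $k\ge 1$, which follows from $w(0)=0$ and is needed both for $\sigma\le-(1+\rho^2)/2$ and for the sign of the product $k\bigl(k-1-\RM(z_0w''/w')\bigr)$; and the existence of $z_0$ on $|z|=r_0$ with $\RM p(z_0)=0$, which needs one line of continuity/compactness since $\RM p\ge 0$ on the closed disk of radius $r_0$. None of these is a gap in substance.
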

Theory of the second-order differential subordinations in $\Delta$ have extended to third-order differential subordinations
by  Antonino\ and\  Miller \cite{Antonino-Miller}. They determined properties of analytic functions $p$ in $\Delta$ that satisfy
the following third order differential subordination:
\begin{align*}
\{\psi( p(z), z p'(z), z^2 p''(z), z^3 p'''(z); z)\} \subset \Omega,
\end{align*}
where, $z \in \Delta$,  $\Omega$ is any set in $\mathbb{C}$, and $\psi: \mathbb{C}^4 \times \Delta \to  \mathbb{C}$.

Next we write down few important definition and results from \cite{Antonino-Miller} which are required in sequel.
\begin{definition}{\rm\cite[p. 440]{Antonino-Miller}}
Let $\psi: \mathbb{C}^4 \times \Delta \to  \mathbb{C}$ and $h$ be univalent in $\Delta$. If $p$ is analytic in $\Delta$ and satisfies the third-order differential subordination
\begin{align}\label{eqn:def-1-miller}
\psi(p(z), z p'(z), z^2 p''(z), z^3 p'''(z); z)\prec h(z)
\end{align}
then $p$ is called a solution of the differential subordination. A univalent function $q$ is
called a dominant of the solutions of the differential subordination or more simply a
dominant if $p\prec q$ for all $p$ satisfying \eqref{eqn:def-1-miller}. A dominant $\tilde{q}$ that satisfies
$\tilde{q} \prec q$ for all
dominant of \eqref{eqn:def-1-miller} is called the best dominant of \eqref{eqn:def-1-miller}. Note that the best dominant is unique up to a rotation of $\Delta$.
\end{definition}

\begin{definition}{\rm\cite[p. 441]{Antonino-Miller}}
Let $\mathcal{Q}$ denote the set of functions $Q$ that are analytic and univalent on the set
$\overline{\Delta}\setminus E(q) $, where
\[ E(q)= \{ \zeta \in \partial\Delta : \lim_{z \to \zeta} q(z)= \infty\},\]
and are such that
\[ \min|q'(\zeta)|=\rho>0\]
for $\zeta \in \overline{\Delta}\setminus E(q) $. Further, let the subclass of $\mathcal{Q}$  for which $q(0)=a$ be denoted by
$\mathcal{Q}(0)=\mathcal{Q}_0$.
\end{definition}

\begin{definition}{\rm\cite[p. 449]{Antonino-Miller}}\label{def-miller-anton}
Let $\Omega$ be any set in $\mathbb{C}$, $q \in \mathcal{Q}$ and $n \in \mathbb{N}\setminus\{1\}.$
The class of admissible function $\Psi_n[\Omega, q]$ consists of those functions  $\psi: \mathbb{C}^4 \times \Delta \to  \mathbb{C}$ that satisfy the admissibility condition:
$
\psi( r, s, t, u; z) \notin \Omega,
$
whenever
\[r= q(\zeta), \quad s=m \zeta q'(\zeta), \quad \RM \left(\frac{t}{s}+1\right) \geq m \RM\left(\frac{\zeta q''(\zeta)}{q'(\zeta)}+1\right)\]
and
\[ \RM\left(\frac{u}{s}\right) \geq m^2 \RM\left(\frac{\zeta^2 q'''(\zeta)}{q'(\zeta)}+1\right)\]
where $z \in \Delta$, $\zeta \in \overline{\Delta}\setminus E(q),$ and $m \geq n$.
\end{definition}

\begin{theorem}{\rm\cite[p. 449]{Antonino-Miller}}\label{thm-miller-anton}
Let $p \in \mathcal{H}[a, n]$ with $n \geq 2$. Also let $q \in \mathcal{Q}(a)$ and satisfy the following conditions:
\begin{align}\label{eqn:Antonino-Miller-thm-1}
\RM\left(\frac{\zeta^2 q'''(\zeta)}{q'(\zeta)}\right) \geq 0 \quad \text{and} \quad \left|\frac{z p'(z)}{q'(\zeta)}\right|\leq m,
\end{align}
where $z \in \Delta$, $\zeta \in \overline{\Delta}\setminus E(q),$ and $m \geq n$. If $\Omega$ is any set in $\mathbb{C}$,
$\psi \in \Psi_n[\Omega, q]$ and
\begin{align*}
\psi(p(z), z p'(z), z^2 p''(z), z^3 p'''(z); z)\in \Omega,
\end{align*}
then $p(z) \prec q(z)$.
\end{theorem}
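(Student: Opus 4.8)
The plan is to argue by contradiction, adapting the admissibility method of Miller and Mocanu from the second-order theory to the third-order setting. First I would suppose, contrary to the conclusion, that $p \not\prec q$. Since $p(0)=a=q(0)$ and $q$ is univalent on $\overline{\Delta}\setminus E(q)$ with $|q'|\geq \rho>0$ there, a normalization step is needed to make boundary analysis legitimate: I would replace $p$ and $q$ by the dilations $p_r(z)=p(rz)$ and $q_r(z)=q(rz)$, carry out the argument for $r<1$ where everything is analytic on $\overline{\Delta}$, and recover the claim by letting $r\to 1^-$. Under $p\not\prec q$ one then locates a point $z_0\in\Delta$ and a boundary point $\zeta_0\in\partial\Delta\setminus E(q)$ such that $p(z_0)=q(\zeta_0)$ while $p(\{|z|<|z_0|\})\subset q(\Delta)$; that is, $z_0$ is the first point at which the image of $p$ reaches the boundary curve $q(\partial\Delta)$.

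The conceptual core is the function $w=q^{-1}\circ p$, defined and analytic near $z_0$ because $q$ is univalent with nonvanishing derivative. It satisfies $w(0)=0$, $w(z_0)=\zeta_0$, and $|w|$ attains its maximum value $1$ over $|z|\leq |z_0|$ precisely at $z_0$. The decisive ingredient is the third-order generalization of the Jack--Miller--Mocanu lemma: at such a boundary-maximum point there is a real number $m\geq n$ with $z_0 w'(z_0)=m\zeta_0$, together with the real-part inequalities $\RM\!\left(\frac{z_0 w''(z_0)}{w'(z_0)}+1\right)\geq m$ and an analogous lower bound for $\RM\!\left(\frac{z_0^2 w'''(z_0)}{w'(z_0)}\right)$. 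These are obtained by expanding $w$ in a Taylor series about $z_0$ and exploiting that $|w|^2$ has an interior-to-boundary maximum there, so that its first angular derivative vanishes and its second is nonpositive; differentiating once more produces the third-order estimate.

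Next I would transfer these bounds from $w$ to $p$ via the chain rule applied to $p=q\circ w$. Writing $r=p(z_0)$, $s=z_0 p'(z_0)$, $t=z_0^2 p''(z_0)$ and $u=z_0^3 p'''(z_0)$, the first relation gives $r=q(\zeta_0)$ and $s=z_0 q'(\zeta_0)w'(z_0)=m\zeta_0 q'(\zeta_0)$. Differentiating $p'=q'(w)w'$ twice more, evaluating at $z_0$, and inserting the $w$-bounds together with the hypotheses \eqref{eqn:Antonino-Miller-thm-1} — the condition $|z p'(z)/q'(\zeta)|\leq m$ pins down the correct multiplier $m\geq n$, while $\RM(\zeta^2 q'''(\zeta)/q'(\zeta))\geq 0$ is exactly what is required to push the second-order comparison through to third order — shows that the quadruple $(r,s,t,u)$ satisfies the admissibility inequalities of Definition~\ref{def-miller-anton}. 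Consequently, since $\psi\in\Psi_n[\Omega,q]$, the admissibility condition yields $\psi(r,s,t,u;z_0)\notin\Omega$, which directly contradicts the standing hypothesis that $\psi(p(z),zp'(z),z^2p''(z),z^3p'''(z);z)\in\Omega$ for all $z\in\Delta$, evaluated at $z=z_0$. This contradiction forces $p\prec q$.

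I expect the main obstacle to be establishing the new third-order derivative estimate for $w$ at the boundary-maximum point, since the first- and second-order bounds are classical (Jack's lemma and its Miller--Mocanu refinement) whereas the inequality on $\RM\!\left(z_0^2 w'''(z_0)/w'(z_0)\right)$ is genuinely new and demands a careful higher-order local analysis; the two hypotheses in \eqref{eqn:Antonino-Miller-thm-1} are precisely the structural assumptions that allow that estimate to match the admissibility condition in Definition~\ref{def-miller-anton}.
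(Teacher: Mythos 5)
First, a point of reference: the paper itself contains no proof of Theorem~\ref{thm-miller-anton} --- it is quoted verbatim from Antonino and Miller \cite{Antonino-Miller} --- so your attempt can only be measured against their original argument. Your overall architecture does coincide with it: assume $p\not\prec q$, locate a first-contact point $z_0\in\Delta$ with $p(z_0)=q(\zeta_0)$, $\zeta_0\in\partial\Delta\setminus E(q)$ and $p(\{|z|<|z_0|\})\subset q(\Delta)$, set $w=q^{-1}\circ p$, obtain derivative estimates at $z_0$, transfer them to the quadruple $(p(z_0),z_0p'(z_0),z_0^2p''(z_0),z_0^3p'''(z_0))$ by the chain rule, and let the admissibility condition of Definition~\ref{def-miller-anton} force $\psi(p(z_0),z_0p'(z_0),z_0^2p''(z_0),z_0^3p'''(z_0);z_0)\notin\Omega$, contradicting the hypothesis. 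Two side remarks: the dilation step you insert is misplaced in this theorem, since replacing $q$ by $q_r$ destroys the hypothesis $\psi\in\Psi_n[\Omega,q_r]$ (that variant is a separate result, cf.\ Theorem~\ref{theorem-3}); and the multiplier $m\geq n$ is produced by Jack's lemma from the order of vanishing of $w$ at the origin, not ``pinned down'' by the condition $|zp'(z)/q'(\zeta)|\leq m$.

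The genuine gap lies exactly at the step you defer, and the mechanism you propose for it cannot work. Writing $f(\theta)=|w(r_0e^{i\theta})|^2$ with $z_0=r_0e^{i\theta_0}$, the conditions $f'(\theta_0)=0$ and $f''(\theta_0)\leq 0$ are what yield $z_0w'(z_0)=m\zeta_0$ and $\RM\left(z_0w''(z_0)/w'(z_0)+1\right)\geq m$; but ``differentiating once more'' yields nothing, because at an interior maximum the third derivative of a real function carries no sign constraint (if $f''(\theta_0)<0$ it is completely arbitrary). Consequently there is no standalone lower bound for $\RM\left(z_0^2w'''(z_0)/w'(z_0)\right)$ valid for Schwarz-type functions, and this is precisely why the hypotheses \eqref{eqn:Antonino-Miller-thm-1} involve $p$ and $q$ jointly: a pure Schwarz-lemma statement would not need them. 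What must actually be shown is that, after the chain rule
\[
\frac{z_0^2 p'''(z_0)}{p'(z_0)}
= m^2\,\frac{\zeta_0^2 q'''(\zeta_0)}{q'(\zeta_0)}
+ 3\,\frac{q''(\zeta_0)}{q'(\zeta_0)}\,z_0^2 w''(z_0)
+ \frac{z_0^2 w'''(z_0)}{w'(z_0)},
\]
the sum of the last two terms has nonnegative real part (in the precise form demanded by Definition~\ref{def-miller-anton}); it is in controlling this cross term that both conditions of \eqref{eqn:Antonino-Miller-thm-1} --- the sign condition $\RM\left(\zeta^2q'''(\zeta)/q'(\zeta)\right)\geq 0$ and the bound $\left|zp'(z)/q'(\zeta)\right|\leq m$ --- are consumed, through a global argument rather than a Taylor expansion at $z_0$. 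Your proposal treats these hypotheses as ingredients to ``insert'' after a local analysis of $w$ alone; in the actual proof they drive the estimate itself, and until that estimate is derived the argument is incomplete at its decisive point.
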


Recently, Tang and Deniz \cite{Tang-Deniz} applied the above third-order  differential subordination concept
to an operator involving the generalized Bessel functions by considering suitable class of admissible functions.

In section \ref{section-2}, we obtain the sufficient condition for which the Bessel-Struve kernel functions have relation with  the functions of the Janowski class.  The classes of  admissible functions applicable for the Besse-Struve kernel function in $\Delta$ are introduced in Section \ref{section-3}  and then results related to the third order differential subordinations involving this function are derived.

\section{Inclusion of Bessel-Struve kernel  in the Janowski Class}\label{section-2}

\begin{theorem}\label{thm:-janw-cc}
Let $-1 \leq B \leq 3- 2 \sqrt{2} \approx 0.171573$. Suppose
 $B < A \leq 1$, and $\lambda$, $\alpha  \in \mathbb{R}$ satisfy
 \begin{align}\label{eqn:thm-janw-cc-0}
\alpha \geq \max \left\{0,  \tfrac{\left|\lambda \right|}{2}\left|\tfrac{\lambda (1+A)(1+B)+M(1+B)^2}{A-B}\right|\right\}.
\end{align}
Further let $A$, $B$, $\alpha$ and $\lambda$ satisfy either the inequality
\begin{align}\label{eqn:thm-janw-cc-2}\nonumber
4\alpha ^2 -&\frac{\lambda}{A-B} \left|4\alpha  \big(\lambda(A+B)+2MB\big)+\frac{(1+B)^2}{1-B}\big(\lambda(1+A)+ M(1+B)\big)\right|
+2 \alpha \frac{1+B}{1-B}\\
&\geq \frac{\lambda^2(1-B^2)(\lambda(1-A)+ M (1-B))(\lambda(1+A)+ M(1+B))}{(A-B)^2},
\end{align}
whenever
\begin{align}\label{eqn:thm-janw-cc-1}
\left|4\alpha  \big(\lambda (A+B)+2MB\big)(1-B)+(1+B)^2\big(\lambda(1+A)+ M(1+B)\big)\right|
 \geq 2 \lambda^3(1-B)(A-B)
\end{align}
or the inequality
\begin{align}\label{eqn:thm-janw-cc-3}\nonumber
&\big(4\alpha \lambda \big(\lambda (A+B)+2MB\big)+\frac{1+B}{1-B}\big(\lambda^2 (1+A)(1+B)+\lambda M(1+B)^2\big)\big)^2\\ \nonumber
& \quad \leq 4((\lambda^2(1-AB)+\lambda M(1-B^2))^2-(\lambda^2(1-A)(1-B)+\lambda M(1-B)^2)\\&(\lambda^2(1+A)(1+B)+\lambda M(1+B)^2))\big(4\alpha ^2 +2\alpha \frac{1+B}{1-B}-\left(\tfrac{\lambda^2(1-AB)+\lambda M(1-B^2)}{A-B}\right)^2\big),
\end{align}
whenever
\begin{align}\label{eqn:thm-janw-cc-4}
\left|4\alpha  \big(\lambda (A+B)+2MB\big)(1-B)+(1+B)^2\big(\lambda(1+A)+ M(1+B)\big)\right|
< 2 \lambda^3(1-B)(A-B).
\end{align}
If $(1+B)\mathtt{S}_{\alpha, \lambda }(z) \neq (1+A)$,  then
$\mathtt{S}_{\alpha, \lambda }(z) \in \mathcal{P}[A,B]$.
\end{theorem}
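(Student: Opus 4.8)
The plan is to reformulate $\mathtt{S}_{\alpha,\lambda}\in\mathcal{P}[A,B]$ as a positive–real–part statement and apply Lemma~\ref{lem:miller-mocanu-1}. Since $A\neq B$, the map $w\mapsto(1+Aw)/(1+Bw)$ is a univalent self-map of $\Delta$ onto a disk (a half-plane when $B=-1$), so $\mathtt{S}_{\alpha,\lambda}\prec(1+Az)/(1+Bz)$ is equivalent to $\mathtt{S}_{\alpha,\lambda}(\Delta)$ lying in that image, and composing with the Cayley map this is in turn equivalent to $\RM p>0$, where
\begin{align*}
p(z):=\frac{(1-B)\mathtt{S}_{\alpha,\lambda}(z)-(1-A)}{(1+A)-(1+B)\mathtt{S}_{\alpha,\lambda}(z)}.
\end{align*}
From the $n=0$ term of \eqref{eqn:B-S-power} we have $\mathtt{S}_{\alpha,\lambda}(0)=1$, so $p(0)=1$ and $p\in\mathcal{H}[1,1]$; the hypothesis $(1+B)\mathtt{S}_{\alpha,\lambda}(z)\neq(1+A)$ is exactly the non-vanishing of the denominator that makes $p$ analytic on $\Delta$. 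Inverting the Möbius relation gives $\mathtt{S}_{\alpha,\lambda}=\big((1-A)+(1+A)p\big)/\big((1-B)+(1+B)p\big)$, which I will feed into the differential equation.

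Writing $D(p):=(1-B)+(1+B)p$, differentiation yields $\mathtt{S}_{\alpha,\lambda}'=2(A-B)p'/D(p)^2$ and $\mathtt{S}_{\alpha,\lambda}''=2(A-B)\big(p''D(p)-2(1+B)(p')^2\big)/D(p)^3$. Substituting these into the defining equation \eqref{eqn:kumar-hypr-ode} and organizing the result around the triple $\big(p,zp',z^2p''\big)$ turns \eqref{eqn:kumar-hypr-ode} into the identity $\Psi\big(p(z),zp'(z),z^2p''(z);z\big)=0$ with
\begin{align*}
\Psi(r,s,t;z)=\frac{2(A-B)}{D(r)^2}\left(t+(2\alpha+1)s-\frac{2(1+B)s^2}{D(r)}\right)-z\left(\lambda^2\,\frac{(1-A)+(1+A)r}{D(r)}+M\right).
\end{align*}
Taking $\Omega:=\{0\}$, the subordination hypothesis of Lemma~\ref{lem:miller-mocanu-1} holds automatically, and its conclusion $\RM p>0$ is precisely $\mathtt{S}_{\alpha,\lambda}\in\mathcal{P}[A,B]$. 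So the entire theorem reduces to verifying the admissibility condition for this $\Psi$.

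For admissibility I would put $r=i\rho$ ($\rho\in\mathbb{R}$), $s=\sigma\le-(1+\rho^2)/2$, $t=\mu+i\nu$ with $\sigma+\mu\le0$, and show $\Psi(i\rho,\sigma,\mu+i\nu;z)\neq0$ for all $z\in\Delta$. Here $D(i\rho)=(1-B)+(1+B)i\rho$ has $\RM D(i\rho)=1-B$ and $|D(i\rho)|^2=(1-B)^2+(1+B)^2\rho^2$. Since $z$ enters only the last term and does so affinely, $\{\Psi(i\rho,\sigma,\mu+i\nu;z):z\in\Delta\}$ is an open disk centred at the $z$-free part $C$, of radius $\big|\lambda^2\big((1-A)+(1+A)i\rho\big)/D(i\rho)+M\big|$; thus $0$ is excluded for all admissible $z$ iff $|C|$ is at least this radius. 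Minimising $|C|$ first over $t$ in the half-plane $\mu\le-\sigma$ (the minimiser sits on the line $\mu=-\sigma$, with $\nu$ chosen to kill the imaginary part) and then over $\sigma\le-(1+\rho^2)/2$ (the minimum occurs at the endpoint $\sigma=-(1+\rho^2)/2$) yields the closed form
\begin{align*}
\min|C|=\frac{2|A-B|\,(1+\rho^2)}{|D(i\rho)|^2}\left(\alpha+\frac{(1-B^2)(1+\rho^2)}{2\,|D(i\rho)|^2}\right).
\end{align*}
Notice that positivity of the inner bracket—hence the very fact that the disk can avoid the origin—requires $\alpha\ge0$, which is the first alternative inside the maximum in \eqref{eqn:thm-janw-cc-0}.

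It then remains to prove $\min|C|\ge\text{radius}$ for every $\rho\in\mathbb{R}$. Squaring and clearing the common powers of $|D(i\rho)|$ turns this into the non-negativity, for all $x=\rho^2\ge0$, of a quartic in $x$ whose coefficients are explicit in $A,B,\alpha,\lambda,M$. Matching its leading (degree-four) coefficient produces the quantitative lower bound on $\alpha$ recorded as the second alternative in \eqref{eqn:thm-janw-cc-0}, while the restriction $B\le3-2\sqrt2$, equivalent to $(1-B)^2\ge4B$ (i.e. $B^2-6B+1\ge0$), is what keeps the remaining coefficients of the correct sign so that the endpoint reduction in $\sigma$ and the sign of $(1+B)^2/(1-B)$ are legitimate. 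Finally, whether the quartic attains its minimum at an interior point of $[0,\infty)$ or only at an endpoint splits the verification into two cases: the endpoint case is governed by the boundary estimate \eqref{eqn:thm-janw-cc-1} and reduces to \eqref{eqn:thm-janw-cc-2}, whereas the interior case \eqref{eqn:thm-janw-cc-4} is handled by the discriminant inequality \eqref{eqn:thm-janw-cc-3}. I expect this last step—the bookkeeping in the constrained minimisation and the case analysis of the quartic in $\rho^2$—to be the main obstacle; everything preceding it is a direct substitution into \eqref{eqn:kumar-hypr-ode} together with the two standard Möbius normalisations.
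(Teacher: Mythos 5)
Your setup coincides exactly with the paper's: the same M\"obius function $p$ (the paper writes it with an overall minus sign, but it is the same function, with $p(0)=1$), the same substitution of $\mathtt{S}_{\alpha,\lambda}$, $\mathtt{S}'_{\alpha,\lambda}$, $\mathtt{S}''_{\alpha,\lambda}$ into \eqref{eqn:kumar-hypr-ode}, and the same application of Lemma \ref{lem:miller-mocanu-1} with $\Omega=\{0\}$ (your $\Psi$ is the paper's multiplied by $2(A-B)/D(r)^2$, which is harmless since $D(i\rho)=(1-B)+(1+B)i\rho$ never vanishes). Moreover, your disk-geometry observation and the closed form for $\min|C|$ are correct, and in fact sharper than what the paper does: the paper never treats $z$ exactly, but instead bounds $\RM\Psi$ from above with $z=x+iy$ kept explicit, obtaining a concave quadratic $Q(\rho)=p_1\rho^2+q_1\rho+r_1$ whose coefficients depend on $x$ and $y$.

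The genuine gap is the final step, which is where the entire content of this theorem lives, since the theorem \emph{is} the list of explicit inequalities. The paper's hypotheses are tailored to its specific chain of estimates: after $\max_{\rho}Q(\rho)=(4p_1r_1-q_1^2)/(4p_1)$ and the substitution $y^2<1-x^2$, everything reduces to nonnegativity of a quadratic $R(x)=mx^2+nx+r$ in the single variable $x=\RM z\in(-1,1)$; conditions \eqref{eqn:thm-janw-cc-1} and \eqref{eqn:thm-janw-cc-4} are precisely the dichotomy $|n|\ge 2|m|$ versus $|n|<2m$ (vertex of $R$ outside or inside the interval), and \eqref{eqn:thm-janw-cc-2}, \eqref{eqn:thm-janw-cc-3} are respectively $m+r-|n|\ge0$ and the discriminant bound $n^2\le 4mr$. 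In your formulation the variable $x$ has been eliminated exactly by the disk argument, so no such quadratic exists; what you must prove is that \eqref{eqn:thm-janw-cc-0}--\eqref{eqn:thm-janw-cc-4} force your quartic in $\rho^2$ to be nonnegative on $[0,\infty)$, and your assertion that its endpoint/interior cases ``reduce to'' \eqref{eqn:thm-janw-cc-2} and \eqref{eqn:thm-janw-cc-3} is backed by no computation and cannot follow from structural matching alone: the quartic's coefficients, its domain $[0,\infty)$, and its natural case split are different objects from $m,n,r$, the interval $(-1,1)$, and $|n|\gtrless 2|m|$. Two symptoms confirm the mismatch. First, the nonnegativity of your quartic's leading coefficient is implied by a condition strictly weaker than \eqref{eqn:thm-janw-cc-0} (it acquires an extra additive slack of $(1-B)/(2(1+B))$), whereas the actual role of \eqref{eqn:thm-janw-cc-0} in the paper is to make $p_1<0$ for every $|x|<1$. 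Second, in your exact minimization the hypothesis $B\le 3-2\sqrt2$ is never used, while in the paper it is exactly the condition under which $(1+\rho^2)^2/\big((1-B)^2+(1+B)^2\rho^2\big)$ is minimized at $\rho=0$, producing the constant $(1+B)/(2(1-B))$ (for $B\ge 3-2\sqrt2$ the minimum moves to an interior $\rho$ and gives $8B(1-B)/(1+B)^3$, which is what distinguishes Theorem \ref{thm:-janw-cc-2}). A stated hypothesis going inert is a strong sign that your reduction is not tracking the inequalities you are supposed to verify. To close the argument you would have to expand your quartic and check, by an honest computation, that the stated hypotheses imply its nonnegativity on $[0,\infty)$ --- or else abandon the exact route and reproduce the paper's estimates, in which the conditions appear in their intended roles.
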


\begin{proof}
Define the analytic function $p : \Delta \to  \mathbb{C}$ by
\begin{align*}
p(z) := - \frac{ (1-A) - (1-B) S_{\alpha, \lambda}(z)}{ (1+A) - (1+B) \mathtt{S}_{\alpha, \lambda }(z)},
\end{align*}
 Then
\begin{align}\label{eqn-thm-1-1}
\mathtt{S}_{\alpha, \lambda }(z)& = \frac{(1-A) + (1+A) p(z)}{(1-B) + (1+B) p(z)},\\ \label{eqn-thm-1-2}
\mathtt{S}'_{\alpha, \lambda }(z)& = \frac{ 2 (A-B) p'(z)}{((1-B) + (1+B) p(z))^2 },\\
\label{eqn-thm-1-3}
 \mathtt{S}''_{\alpha, \lambda }(z) &=\frac{2 (A-B)( (1-B) + (1+B) p(z) ) p''(z)
- 4 (1+B) (A-B) {p'}^2(z)}{( (1-B) + (1+B) p(z) )^3}.
\end{align}
Using $(\ref{eqn-thm-1-1})$--$(\ref{eqn-thm-1-3})$, the Bessel-Struve differential equation
$(\ref{eqn:kumar-hypr-ode})$ yields
\begin{align}\label{eqn:thm-1-ode} \nonumber
z^2p''(z)& - \tfrac{ 2( 1+B)}{(1-B)+ (1+B)p(z)}  (zp'(z))^2 + (2\alpha +1) zp'(z) \\
&-\bigg(\tfrac{(\lambda^2(1-A)+(1+A)p(z))((1-B)+(1+B)p(z))+\lambda M((1-B)+(1+B)p(z))^2}{2(A-B)}\bigg)z = 0.
\end{align}

With $\Omega = \{0\}$, define $\Psi(r, s, t;z)$ by
\begin{align}\label{eqn:thm-1-ode-2}\nonumber
&\Psi(r, s, t;z) := t - \tfrac{ 2( 1+B)}{(1-B)+ (1+B)r} s^2 + (2\alpha +1)s\hspace{2in}\\&\hspace{1in}-
\bigg(\tfrac{(\lambda^2(1-A)+(1+A)r)((1-B)+(1+B)r)+\lambda M((1-B)+(1+B)r)^2}{2(A-B)}\bigg)z.
\end{align}
It follows from  $(\ref{eqn:thm-1-ode})$ that
$
\Psi(p(z), z p'(z), z^2 p''(z); z)  \in \Omega.$  To show $\RM p(z) >0$ for $z \in \Delta$,
from Lemma $\ref{lem:miller-mocanu-1}$, it is sufficient to establish
$
\RM \Psi( i\rho, \sigma, \mu+ i \nu; z) < 0
$
in $\Delta$  for any real $\rho$, $\sigma \leq -(1+\rho^2)/2$, and $\sigma+\mu \leq 0$.

With $z = x+ iy \in \Delta$, it readily follows from $(\ref{eqn:thm-1-ode-2})$ that
\begin{align}\label{eqn:thm-1-re-psi}\nonumber
\RM \Psi( i\rho, \sigma, \mu+ i \nu; z)
& = \mu - \tfrac{ 2( 1-B^2)}{(1-B)^2+ (1+B)^2 \rho^2} \sigma^2 + (2\alpha+1) \sigma
 +\left(\tfrac{\lambda^2(1-AB)+\lambda M (1-B^2)}{(A-B)}\right)\rho y\\
&\hspace{.5in}-\left(\tfrac{\lambda^2((1-A)(1-B)-(1+A)(1+B)\rho^2)+\lambda M ((1-B)^2-(1+B)^2\rho^2)}{2(A-B)}\right)x.
\end{align}
Since $\sigma \leq -(1+\rho^2)/2$, and $B \in [-1, 3-2\sqrt{2}]$,
\begin{align*}
\frac{ 2( 1-B^2)}{(1-B)^2+ (1+B)^2 \rho^2} \sigma^2
 &\geq \frac{ 2( 1-B^2)}{(1-B)^2+ (1+B)^2 \rho^2} \frac{(1+\rho^2)^2}{4}
 \geq \dfrac{1+B}{2(1-B)}.
\end{align*}
 Thus
\begin{align*}
&\RM \Psi( i\rho, \sigma, \mu+ i \nu; z)\\
&\leq 2\alpha\sigma -\tfrac{\lambda ^2 ((1-A)(1-B)-(1+A)(1+B)\rho^2)+\lambda M((1-B)^2-(1+B)^2\rho^2)}{2(A-B)}x
+ \tfrac{\lambda^2(1-AB)+\lambda M (1-B^2)}{A-B}\rho y-\tfrac{1+B}{2(1-B)}\\
&\leq -\alpha (1+\rho^2) -\tfrac{\lambda ^2 ((1-A)(1-B)-(1+A)(1+B)\rho^2)+\lambda M((1-B)^2-(1+B)^2\rho^2)}{2(A-B)}x
+ \tfrac{\lambda^2(1-AB)+\lambda M (1-B^2)}{(A-B)}\rho y-\tfrac{1+B}{2(1-B)} \\
&=p_1 \rho^2+  q_1 \rho + r_1 := Q(\rho),
\end{align*}
where
\begin{align*}
p_1&= -\alpha +\frac{\lambda^2(1+A)(1+B)+\lambda M(1+B)^2}{2(A-B)}x, \hspace{2.5in} \\
q_1&= \frac{\lambda^2(1-AB)+\lambda M (1-B^2)}{(A-B)}y,\\
r_1 &= -\alpha - \frac{1+B}{2(1-B)}-\bigg(\frac{\lambda^2(1-A)(1-B)+\lambda M(1-B)^2}{2(A-B)}\bigg)x.
\end{align*}

Condition $(\ref{eqn:thm-janw-cc-0})$ shows that
\begin{align*}
p_1 &= -\alpha +\frac{\lambda^2(1+A)(1+B)+\lambda M(1+B)^2}{2(A-B)}x\\
 &< -\bigg(\alpha - \frac{|\lambda|}{2}\bigg|\frac{\lambda (1+A)(1+B)+M(1+B)^2}{A-B}\bigg|\bigg) <0.
\end{align*}

 Since $\displaystyle{ \max_{\rho \in \mathbb{R}} \{ p_1 \rho^2+ q_1\rho + r_1\}=(4 p_1 r_1 - q_1^2)/(4 p_1)}$ for $p_1 < 0$, it is clear that $Q(\rho) <0$ when
 \begin{align*}
\bigg(\frac{\lambda^2(1-AB)+\lambda M (1-B^2)}{A-B}\bigg)^2 y^2 &<  \bigg(-2\alpha +\bigg(\frac{\lambda^2(1+A)(1+B)+\lambda M (1+B)^2}{A-B}\bigg)x\bigg)\\
&\times \bigg(-2\alpha -\frac{1+B}{1-B}-\bigg(\frac{\lambda^2(1-A)(1-B)+\lambda M(1-B)^2}{A-B}\bigg)x\bigg),
 \end{align*}
 $|x|, |y| < 1$.
 As $y^2 < 1- x^2$, the above condition holds whenever
 \begin{align*}
&\bigg(\frac{\lambda^2(1-AB)+\lambda M (1-B^2)}{A-B}\bigg)^2 (1-x^2)\\ &<  \bigg(-2\alpha +\bigg(\frac{\lambda^2(1+A)(1+B)+\lambda M (1+B)^2}{A-B}\bigg)x\bigg)\\
& \hspace{.1in} \times \bigg(-2\alpha -\frac{1+B}{1-B}-\bigg(\frac{\lambda^2(1-A)(1-B)+\lambda M(1-B)^2}{A-B}\bigg)x\bigg),
 \end{align*}
 that is, when
 \begin{align}\label{eqn:thm-1-x} \nonumber
&\tfrac{(\lambda^2(1-AB)+\lambda M (1-B^2))^2-(\lambda^2(1-A)(1-B)+\lambda M (1-B)^2)(\lambda^2(1+A)(1+B)+\lambda M(1+B)^2)}{(A-B)^2}x^2\quad\quad\\   \nonumber
&\quad \quad +\tfrac{1}{A-B}\big(-4\alpha \lambda (\lambda (A+B)+2MB)) -\lambda \tfrac{(1+B)^2}{1-B}(\lambda(1+A)+M (1+B))\big)x
 \\
 &\hspace{.5in} + 4\alpha^2+2\alpha \tfrac{1+B}{1-B}-\big(\tfrac{\lambda^2(1-AB)+\lambda M (1-B^2)}{A-B}\big)^2 \geq 0.
\end{align}

To establish inequality $(\ref{eqn:thm-1-x})$, consider the polynomial $R$ given by
\begin{align*}\label{eqn:thm-1-R(x)}
R(x) := m x^2 + n x+ r,\quad |x| <1,
\end{align*}
where
\begin{align*}
m &:=\tfrac{(\lambda^2(1-AB)+\lambda M (1-B^2))^2-(\lambda^2(1-A)(1-B)+\lambda M (1-B)^2)(\lambda^2(1+A)(1+B)+\lambda M(1+B)^2)}{(A-B)^2},\\
&=\tfrac{(\lambda^2(1-AB)+\lambda M (1-B^2))^2-\lambda^2(1-B^2)(\lambda(1-A)+ M (1-B))(\lambda(1+A)+M(1+B))}{(A-B)^2},\\
n &:= \tfrac{ \lambda}{A-B}\big(-4\alpha (\lambda (A+B)+2MB)) - \tfrac{(1+B)^2}{1-B}(\lambda(1+A)+M (1+B))\big),\\
r &:= 4\alpha^2+2\alpha \tfrac{1+B}{1-B}-\big(\tfrac{\lambda^2(1-AB)+\lambda M (1-B^2)}{A-B}\big)^2.
\end{align*}
The constraint $(\ref{eqn:thm-janw-cc-1})$ yields $|n| \geq 2|m|$, and thus $R(x) \geq m +  r - |n|$. Now the inequality
 $(\ref{eqn:thm-janw-cc-2})$  implies that
\begin{align*}
 &R(x)\\&\geq  m +  r - |n|\\
 &=\tfrac{(\lambda^2(1-AB)+\lambda M (1-B^2))^2-\lambda^2(1-B^2)(\lambda(1-A)+ M (1-B))(\lambda(1+A)+M(1+B))}{(A-B)^2}+ 4\alpha^2+2\alpha \tfrac{1+B}{1-B}\\&-\big(\tfrac{\lambda^2(1-AB)+\lambda M (1-B^2)}{A-B}\big)^2
 - \tfrac{|\lambda|}{A-B}\big|\big(-4\alpha (\lambda (A+B)+2MB)) - \tfrac{(1+B)^2}{1-B}(\lambda(1+A)+M (1+B))\big)\big|\\
& =4\alpha^2 - \tfrac{|\lambda|}{A-B}\big|\big(-4\alpha (\lambda (A+B)+2MB)) - \tfrac{(1+B)^2}{1-B}(\lambda(1+A)+M (1+B))\big)\big|\\&+2\alpha\frac{1+B}{1-B}-\tfrac{(\lambda^2(1-A)(1-B)+\lambda M (1-B)^2)(\lambda^2(1+A)(1+B)+\lambda M (1+B)^2}{(A-B)^2} \geq 0.
\end{align*}

Now considers the case  of the constraint $(\ref{eqn:thm-janw-cc-4})$, which is equivalent to $|n| < 2m$. Then the minimum of $R$ occurs at $x = - n/(2m)$, and
$(\ref{eqn:thm-janw-cc-3})$ yields
\begin{align*}
R(x) \geq  \frac{4mr-n^2}{4m} \geq 0.
\end{align*}

Evidently  $\Psi$ satisfies the hypothesis of Lemma $\ref{lem:miller-mocanu-1}$, and thus $\RM\; p(z) > 0$, that is,
\[
- \frac{ (1-A) - (1-B) \mathtt{S}_{\alpha}(z)}{ (1+A) - (1+B) \mathtt{S}_{\alpha}(z)} \prec \frac{1+z}{1-z}.
\]
Hence there exists an analytic self-map  $w$ of $\Delta$ with $w(0)=0$ such that
\[
- \frac{ (1-A) - (1-B) \mathtt{S}_{\alpha}(z)}{ (1+A) - (1+B) \mathtt{S}_{\alpha}(z)} = \frac{1+w(z)}{1-w(z)},
\]
which implies that $\mathtt{S}_{\alpha}(z) \prec (1+ A z)/(1+B z).$
\end{proof}

Considering $\lambda=A=-B=1$, following result can be obtain from Theorem $\ref{thm:-janw-cc}$.
\begin{corollary}\label{cor:s-p-real}
For $\alpha \in [0, \alpha_0] \cup [3/2,\infty)$, $\RM(\mathtt{S}_\alpha(z))>0$. Here $\alpha_0=0.5$ is the positive root of the identity $4\alpha\Gamma{(\alpha+1)}=\sqrt{\pi}\Gamma{(\alpha+1/2)}.$
\end{corollary}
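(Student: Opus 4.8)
The plan is to apply Theorem~\ref{thm:-janw-cc} with the choice $\lambda=A=1$ and $B=-1$. For these values $\mathcal{P}[1,-1]$ is exactly the class of normalized $p$ with $p\prec(1+z)/(1-z)$, that is, with $\RM p(z)>0$; since $\mathtt{S}_{\alpha,1}=\mathtt{S}_\alpha$, the assertion $\mathtt{S}_{\alpha,1}\in\mathcal{P}[1,-1]$ is precisely the desired $\RM(\mathtt{S}_\alpha(z))>0$. The first step is to feed these parameters into the hypotheses and exploit that $1+B=0$ (and hence $1-B^2=0$): every summand in \eqref{eqn:thm-janw-cc-0}--\eqref{eqn:thm-janw-cc-3} carrying a factor $(1+B)$ then vanishes. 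Writing $M=2\Gamma(\alpha+1)/(\sqrt{\pi}\,\Gamma(\alpha+\tfrac12))$ for $\lambda=1$, the bound \eqref{eqn:thm-janw-cc-0} collapses to $\alpha\ge 0$, while the excluded-value hypothesis $(1+B)\mathtt{S}_{\alpha,\lambda}(z)\neq(1+A)$ becomes $0\neq 2$ and so is automatic.

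The second step is to read off the quadratic $R(x)=mx^2+nx+r$ from the proof of Theorem~\ref{thm:-janw-cc}. With the present parameters its coefficients reduce to $m=1$, $n=4\alpha M$ and $r=4\alpha^2-1$, so that $R(x)=x^2+4\alpha M\,x+(4\alpha^2-1)$. The dichotomy $|n|\ge 2|m|$ versus $|n|<2|m|$ that separates \eqref{eqn:thm-janw-cc-1} from \eqref{eqn:thm-janw-cc-4} becomes the comparison of $\alpha M$ with $\tfrac12$, equivalently of $4\alpha\Gamma(\alpha+1)$ with $\sqrt{\pi}\,\Gamma(\alpha+\tfrac12)$; its crossover is exactly the root $\alpha_0=\tfrac12$ named in the statement. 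For $\alpha\ge\tfrac12$ the constraint \eqref{eqn:thm-janw-cc-1} holds, and the required inequality \eqref{eqn:thm-janw-cc-2} reduces to $R(-1)=4\alpha(\alpha-M)\ge 0$, i.e.\ $\alpha\ge M$, equivalently $\sqrt{\pi}\,\Gamma(\alpha+\tfrac12)\ge 2\Gamma(\alpha)$. I would then verify, using the monotonicity of $\alpha\mapsto\sqrt{\pi}\,\Gamma(\alpha+\tfrac12)/(2\Gamma(\alpha))$ and its value $1$ at $\alpha=\tfrac32$, that this holds precisely for $\alpha\ge\tfrac32$, which yields the branch $[3/2,\infty)$.

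I expect the main obstacle to be the low range $[0,\alpha_0]$. There $\alpha M<\tfrac12$, so \eqref{eqn:thm-janw-cc-4} is the operative case and \eqref{eqn:thm-janw-cc-3} must be checked; since $R$ then attains its minimum at the interior vertex $x=-2\alpha M\in(-1,0]$, the requirement is $R(-2\alpha M)=4\alpha^2(1-M^2)-1\ge 0$. This is precisely where the worst-case quadratic estimate is weakest, because already $R(0)=4\alpha^2-1<0$ for $\alpha<\tfrac12$; confirming the admissibility inequality on this interval is therefore the delicate point, and I would either sharpen the bound used in the proof of Theorem~\ref{thm:-janw-cc} or, failing that, establish positivity on $[0,\alpha_0]$ directly from the integral representation \eqref{kernel-intgra-rep}. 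Indeed, writing $z=x+iy\in\Delta$ gives
\[
\RM(\mathtt{S}_\alpha(z))=\frac{2\Gamma(\alpha+1)}{\sqrt{\pi}\,\Gamma(\alpha+\tfrac12)}\int_0^1(1-t^2)^{\alpha-\frac12}e^{xt}\cos(yt)\,dt,
\]
and since $|yt|\le|y|<1<\tfrac{\pi}{2}$ the integrand is strictly positive, so $\RM(\mathtt{S}_\alpha(z))>0$ outright; reconciling this direct positivity with the gamma-function threshold produced by the admissibility machinery is the crux of the argument.
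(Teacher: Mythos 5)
Your proposal follows the paper's route exactly as far as that route actually works, and then correctly identifies where it breaks. The paper's entire proof of this corollary is one sentence: set $\lambda=A=-B=1$ in Theorem~\ref{thm:-janw-cc}. Your reduction of that specialization is accurate: \eqref{eqn:thm-janw-cc-0} collapses to $\alpha\ge 0$, the quadratic in the proof becomes $R(x)=x^2+4\alpha M x+4\alpha^2-1$, the dichotomy \eqref{eqn:thm-janw-cc-1} versus \eqref{eqn:thm-janw-cc-4} is exactly $\alpha M\ge\tfrac12$ versus $\alpha M<\tfrac12$ (crossover at $\alpha_0=\tfrac12$), and in the first case \eqref{eqn:thm-janw-cc-2} reduces to $\alpha\ge M$, i.e.\ $\sqrt{\pi}\,\Gamma(\alpha+\tfrac12)\ge 2\Gamma(\alpha)$, which by monotonicity of the gamma ratio holds precisely for $\alpha\ge\tfrac32$. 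So on $[3/2,\infty)$ your argument and the paper's coincide.

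Where you depart from the paper is on $[0,\alpha_0]$, and there you are right and the paper is not. For $\alpha<\tfrac12$ the operative hypothesis is \eqref{eqn:thm-janw-cc-3}, which under this specialization reads $64\alpha^2M^2\le 16(4\alpha^2-1)$ and fails for every $\alpha<\tfrac12$ because the right-hand side is negative; equivalently $R(0)=4\alpha^2-1<0$, so $R$ cannot be nonnegative on $(-1,1)$, which is what the proof of Theorem~\ref{thm:-janw-cc} requires. At $\alpha=\tfrac12$ itself, \eqref{eqn:thm-janw-cc-1} holds with equality but \eqref{eqn:thm-janw-cc-2} would demand $\tfrac12\ge M=1$, which also fails. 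The paper's one-line derivation implicitly treats ``\eqref{eqn:thm-janw-cc-2} whenever \eqref{eqn:thm-janw-cc-1}'' as vacuously satisfied once \eqref{eqn:thm-janw-cc-1} fails, but the theorem's proof needs \eqref{eqn:thm-janw-cc-3} in that complementary regime, so the low branch of the corollary does not actually follow from Theorem~\ref{thm:-janw-cc}. Your repair via the integral representation \eqref{kernel-intgra-rep} is correct and complete: for $z=x+iy\in\Delta$ and $t\in[0,1]$ one has $|yt|<1<\tfrac{\pi}{2}$, so the integrand $(1-t^2)^{\alpha-\frac12}e^{xt}\cos(yt)$ is strictly positive, giving $\RM(\mathtt{S}_\alpha(z))>0$ for every $\alpha>-\tfrac12$. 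That elementary argument supersedes both branches of the corollary and, incidentally, settles the range $\alpha\in(0.5,1.5)$ that the remark after Theorem~\ref{thm:-janw-cc-2} declares open; once you have it, the detour through the subordination theorem adds nothing for this particular statement and can be discarded.
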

This result along with the recurrence relation \eqref{eqn:Bessel-struve-rec} gives that
\[\RM\left(\frac{z  \mathtt{S}'_\alpha(z)+2\alpha\mathtt{S}_\alpha(z)}{2\alpha}\right)>0.\]
In particular,  the function $z \mathtt{S}_{1/2}(z)$ is close-to-convex functions with respect to $z$, and hence it is univalent.
\begin{theorem}\label{thm:-janw-cc-2}
Let $3- 2 \sqrt{2} \leq B < A\leq1 $
  and $\lambda$, $\alpha  \in \mathbb{R}$ satisfy
 \begin{align}\label{eqn:thm-2-janw-cc-0}
\alpha \geq \max \left\{0,  \frac{|\lambda|}{2}\bigg|\frac{\lambda(1+A)(1+B)+M(1+B)^2}{A-B}\bigg|\right\}.
\end{align}
suppose $A$, $B$, $\alpha$ and $\lambda$ satisfy either the inequality
\begin{align}\label{eqn:thm-2-janw-cc-2}\nonumber
&(\alpha^2(A-B)^2-\lambda(A-B)\big|\alpha  (\lambda(A+B)+2MB)+\tfrac{4B(1-B)}{(1+B)^2}(\lambda(1+A)+ M(1+B))\big|\\ \notag
& +8\alpha\tfrac{B(1-B)(A-B)^2}{(1+B)^3}\geq  \tfrac{1}{4}\big(\lambda^2(1-A)(1-B)+\lambda M (1-B)^2\big)\big(\lambda^2(1+A)(1+B)+\lambda M(1+B)^2\big)\\&
\end{align}
whenever
\begin{align}\label{eqn:thm-2-janw-cc-1}\nonumber
&(A-B)\left|\alpha \lambda (\lambda (A+B)+2MB)+\frac{4B\lambda(1-B)}{(1+B)^2}\big(\lambda(1+A)+ M(1+B)\big)\right|\hfill\\ \notag
&  \geq \frac{\lambda^2}{2}\left|\big(\lambda(1-AB)+ M (1-B^2)\big)^2-(1-B^2)\big(\lambda(1-A)+ M(1-B)\big)\big(\lambda(1+A)+M(1+B)\big)\right|,\\
&
\end{align}
or the inequality
\begin{align}\label{eqn:thm-2-janw-cc-3}\nonumber
&\bigg(\alpha \bigg(\lambda(A+B)+2MB\bigg)+\frac{4B(1-B)}{(1+B)^2}\bigg(\lambda(1+A)+ M (1+B)\bigg)\bigg)^2\\ \notag
& \leq \big(\lambda(1-AB)+ M (1-B^2)\big)^2-(1-B^2)\big(\lambda (1-A)+ M(1-B)\big)\\&\big(\lambda(1+A)+ M (1+B)\big)\bigg(\alpha^2+8\alpha B\frac{1-B}{(1+B)^3}-\bigg(\frac{\lambda^2(1-AB)+\lambda M(1-B^2)}{2(A-B)}\bigg)^2\bigg)
\end{align}
whenever
\begin{align}\label{eqn:thm-2-janw-cc-4}\nonumber
&(A-B)\left|\alpha \lambda (\lambda (A+B)+2MB)+\frac{4B\lambda(1-B)}{(1+B)^2}\big(\lambda(1+A)+ M(1+B)\big)\right|\hfill\\ \notag
&  \leq \frac{\lambda^2}{2}\left|\big(\lambda(1-AB)+ M (1-B^2)\big)^2-(1-B^2)\big(\lambda(1-A)+ M(1-B)\big)\big(\lambda(1+A)+M(1+B)\big)\right|,\\
&
\end{align}
If $(1+B)\mathtt{S}_{\alpha} (z) \neq (1+A)$,  then
$\mathtt{S}_{\alpha} (z) \in \mathcal{P}[A,B]$.
\end{theorem}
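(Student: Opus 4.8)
The plan is to run the proof of Theorem~\ref{thm:-janw-cc} essentially verbatim, changing only the one estimate in which the range of $B$ intervenes. I would again introduce the analytic function
\[
p(z) := -\frac{(1-A)-(1-B)\mathtt{S}_{\alpha,\lambda}(z)}{(1+A)-(1+B)\mathtt{S}_{\alpha,\lambda}(z)},
\]
well defined because $(1+B)\mathtt{S}_{\alpha,\lambda}(z)\neq(1+A)$, express $\mathtt{S}_{\alpha,\lambda},\mathtt{S}'_{\alpha,\lambda},\mathtt{S}''_{\alpha,\lambda}$ through $p,p',p''$ as in \eqref{eqn-thm-1-1}--\eqref{eqn-thm-1-3}, and substitute into the Bessel--Struve equation \eqref{eqn:kumar-hypr-ode} to obtain the same relation \eqref{eqn:thm-1-ode} and the same admissible function $\Psi$ of \eqref{eqn:thm-1-ode-2}. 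Since $\Psi(p(z),zp'(z),z^2p''(z);z)\in\Omega=\{0\}$, Lemma~\ref{lem:miller-mocanu-1} reduces everything to proving $\RM\Psi(i\rho,\sigma,\mu+i\nu;z)<0$ for real $\rho$, $\sigma\le-(1+\rho^2)/2$ and $\sigma+\mu\le0$, with $\RM\Psi$ given exactly by \eqref{eqn:thm-1-re-psi}.

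The one genuine difference lies in bounding $\tfrac{2(1-B^2)}{(1-B)^2+(1+B)^2\rho^2}\sigma^2$ from below. Using $\sigma^2\ge(1+\rho^2)^2/4$, this term dominates $\phi(\rho^2)$, where
\[
\phi(u):=\frac{(1-B^2)(1+u)^2}{2\big((1-B)^2+(1+B)^2u\big)},\qquad u\ge0.
\]
For $B\le3-2\sqrt2$ the minimum of $\phi$ on $[0,\infty)$ is attained at the endpoint $u=0$, which is what drives Theorem~\ref{thm:-janw-cc}; but for $B\ge3-2\sqrt2$ the critical point $u^\ast=\big((1+B)^2-2(1-B)^2\big)/(1+B)^2$ becomes nonnegative and supplies the minimum. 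A short computation gives $1+u^\ast=8B/(1+B)^2$ and $(1-B)^2+(1+B)^2u^\ast=4B$, whence
\[
\min_{u\ge0}\phi(u)=\phi(u^\ast)=\frac{8B(1-B)}{(1+B)^3}.
\]
I would check that $u^\ast\ge0$ is equivalent to $(1+B)^2\ge2(1-B)^2$, i.e.\ to $B\ge3-2\sqrt2$, precisely the hypothesis here, and that $\phi'$ passes from $-$ to $+$ at $u^\ast$ so that it really is a minimum. This migration of the extremum from the boundary into the interior, together with the evaluation of the sharp constant $8B(1-B)/(1+B)^3$, is the only delicate point; every downstream coefficient inherits this constant, so it must be computed correctly.

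With this bound secured the remainder is bookkeeping parallel to Theorem~\ref{thm:-janw-cc}. Combining $\sigma+\mu\le0$ with $2\alpha\sigma\le-\alpha(1+\rho^2)$ (valid since \eqref{eqn:thm-2-janw-cc-0} forces $\alpha\ge0$) reduces $\RM\Psi$ to a quadratic $Q(\rho)=p_1\rho^2+q_1\rho+r_1$ whose coefficients $p_1,q_1$ coincide with those in Theorem~\ref{thm:-janw-cc}, while $r_1$ now carries $-\,8B(1-B)/(1+B)^3$ in place of $-(1+B)/\big(2(1-B)\big)$. Condition \eqref{eqn:thm-2-janw-cc-0} again gives $p_1<0$, so $\max_\rho Q=(4p_1r_1-q_1^2)/(4p_1)$ and $Q<0$ is equivalent to $q_1^2<4p_1r_1$; substituting $y^2<1-x^2$ turns this into a polynomial inequality $R(x)=mx^2+nx+r\ge0$ on $|x|<1$, with $m,n,r$ the suitably rescaled analogues of those in Theorem~\ref{thm:-janw-cc} (the new constant entering $n$ and $r$). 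I would close with the same dichotomy: when $|n|\ge2|m|$, as guaranteed by \eqref{eqn:thm-2-janw-cc-1}, the vertex lies outside $(-1,1)$ so $R(x)\ge m+r-|n|$, which is nonnegative by \eqref{eqn:thm-2-janw-cc-2}; when $|n|<2m$, as in \eqref{eqn:thm-2-janw-cc-4}, the minimum at $x=-n/(2m)$ equals $(4mr-n^2)/(4m)\ge0$ by \eqref{eqn:thm-2-janw-cc-3}. In either case $\RM p(z)>0$, and unwinding the defining M\"obius relation yields $\mathtt{S}_{\alpha,\lambda}(z)\prec(1+Az)/(1+Bz)$, that is, $\mathtt{S}_{\alpha}(z)\in\mathcal{P}[A,B]$.
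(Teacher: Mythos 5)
Your proposal is correct and follows essentially the same route as the paper's proof: reuse the whole setup of Theorem~\ref{thm:-janw-cc} (the function $p$, the admissible function $\Psi$, Lemma~\ref{lem:miller-mocanu-1}), replace the lower bound $\tfrac{1+B}{2(1-B)}$ by $\tfrac{8B(1-B)}{(1+B)^3}$ when $B\geq 3-2\sqrt{2}$, and rerun the quadratic dichotomy in $\rho$ and then in $x$. Your explicit minimization of $\phi(u)$ at $u^\ast=\big((1+B)^2-2(1-B)^2\big)/(1+B)^2$ is a correct verification of the key estimate that the paper merely asserts, so your write-up is, if anything, more complete on that point.
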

\begin{proof}
Proceeding similarly as in the proof of Theorem $\ref{thm:-janw-cc}$, consider
$\RM \Psi( i\rho, \sigma, \mu+ i \nu; z)$ as given in $(\ref{eqn:thm-1-re-psi})$.
For $\sigma \leq -(1+\rho^2)/2$, $\rho \in \mathbb{R}$, and $B \geq 3- 2 \sqrt{2}$,
\begin{align*}
\frac{ 2( 1-B^2)}{(1-B)^2+ (1+B)^2 \rho^2} \sigma^2
 &\geq \frac{ 2( 1-B^2)}{(1-B)^2+ (1+B)^2 \rho^2} \frac{(1+\rho^2)^2}{4}
\geq \frac{8 B (1-B)}{(1+B)^3}.
\end{align*}
With $z = x+iy \in \Delta$, and $\mu+\sigma<0$, it follows that
\begin{align*}
&\RM\Psi( i\rho, \sigma, \mu+ i \nu; z)\\
&\leq -\alpha(1+\rho^2)-8\tfrac{B(1-B)}{(1+B)^3}+\left(\tfrac{\lambda^2(1-AB)+\lambda M (1-B^2)}{A-B} \right)\rho y\\&-\tfrac{1}{2(A-B)}\left(\lambda^2((1-A)(1-B)-(1+A)(1+B)\rho^2)+\lambda M ((1-B)^2-(1+B)^2\rho^2)\right)x\\
& = p_2 \rho^2 + q_2 \rho + r_2: = Q_1(\rho),
\end{align*}
where
\begin{align*}
p_2&= -\alpha +\tfrac{\lambda^2(1+A)(1+B)+\lambda M (1+B)^2}{2(A-B)}x, \hspace{2.5in} \\
q_2&=\tfrac{\lambda^2(1-AB)+\lambda M (1-B^2)}{A-B} y,\\
r_2 &= -\alpha - 8B \tfrac{1-B}{(1+B)^3}-\bigg(\tfrac{\lambda^2(1-A)(1-B)+\lambda M (1-B)^2}{2(A-B)}\bigg)x.
\end{align*}

In the proof of Theorem $\ref{thm:-janw-cc}$, it can be  observed that the constraint
$(\ref{eqn:thm-2-janw-cc-1})$ implies  $p_2 <0$. Thus $Q_1(\rho) <0$ for all $\rho \in \mathbb{R}$ provided $q_{2}^{2} \leq 4p_{2}r_2$, that is,
\begin{align*}
&\left(\tfrac{\lambda^2(1-AB)+\lambda M (1-B^2)}{A-B}\right)^2 y^2\leq   \left(-2\alpha +\tfrac{\lambda^2(1+A)(1+B)+\lambda M(1+B)^2}{A-B}x\right)\\
& \hspace{2in} \times \left(-2\alpha - 16B \tfrac{1-B}{(1+B)^3}-\tfrac{\lambda^2(1-A)(1-B)+\lambda M(1-B)^2}{A-B}x\right),
 \end{align*}
 $|x|, |y|<1$.  With $y^2 < 1- x^2$, it is sufficient to show
 \begin{align*}
&\left(\tfrac{\lambda^2(1-AB)+\lambda M (1-B^2)}{A-B}\right)^2 (1-x^2)\leq   \left(-2\alpha +\tfrac{\lambda^2(1+A)(1+B)+\lambda M(1+B)^2}{A-B}x\right)\hspace{2in}\\
& \hspace{2in} \times \left(-2\alpha - 16 \tfrac{B(1-B)}{(1+B)^3}-\tfrac{\lambda^2(1-A)(1-B)+\lambda M(1-B)^2}{A-B}x\right),
 \end{align*}
for $|x|<1$. The above inequality is equivalent to showing
\begin{align}\label{eqn:thm-2-x}
R_1(x) := m_1 x^2 + n_1 x+ r_1 \geq 0,
\end{align}
where
\begin{align*}
m_1  &:=\frac{-1}{(A-B)^2}\big(\big(\lambda^2(1-A)(1-B)+\lambda M (1-B)^2\big)\big(\lambda^2(1+A)(1+B)+\lambda M(1+B)^2\big)\big)\\
&\quad+\bigg(\frac{\lambda^2(1-AB)+\lambda M (1-B^2)}{(A-B)}\bigg)^2,\\
n_1 &:= \frac{-1}{A-B}\big(4\alpha \lambda (\lambda(A+B)+2MB\big)+\frac{16B(1-B)}{(1+B)^2}\big(\lambda^2(1+A)+\lambda M(1+B)\big),\\
r_1 &:=4\alpha^2+32\alpha\frac{B(1-B)}{(1+B)^3}-\bigg(\frac{\lambda^2(1-AB)+\lambda M (1-B^2)}{A-B}\bigg)^2.
\end{align*}

If
$(\ref{eqn:thm-2-janw-cc-1})$ holds, then $|n_1| \geq 2|m_1|$. Since $R_1$ is increasing, then $R_1(x) \geq m_1 + r_1 - |n_1|$,  which is nonnegative from $(\ref{eqn:thm-2-janw-cc-2})$. On the other hand, if
$(\ref{eqn:thm-2-janw-cc-3})$ holds, then $|n_1| < 2|m_1|$,  $R_1(x) \geq  (4 m_1 r_1 - n_1^2)/ 4 m_1$,  and
$(\ref{eqn:thm-2-janw-cc-4})$ implies $R_1(x) \geq 0$. Either case establishes $(\ref{eqn:thm-2-x})$.
\end{proof}

\begin{remark}
A graphical experiment using mathematica shows that $\RM(\mathtt{S}_\alpha(z))>0$ for all $\alpha \geq 0$ and $z \in \Delta$. But our computation restrict on $[0, 1/2] \cup[3/2, \infty)$.  Thus the result is open for $\alpha \in (0.5, 1.5)$.

\end{remark}

\section{Third order differential subordination for $\mathtt{S}_\alpha$}\label{section-3}
In this section we introduce an admissible class  $\Phi_{\mathtt{g}}[\Omega, q]$ as follows:
\begin{definition}\label{def-phi-g}
Let $\Omega$ be a set in $\mathbb{C}$ and $q \in \mathcal{Q}_0 \cap \mathcal{H}_0$. The class of admissible function $\Phi_{\mathtt{g}}[\Omega, q]$ consists of those functions $\phi : \mathbb{C}^4 \times \Delta \to \mathbb{C}$ that satisfy the following admissibility condition
\[ \phi(\beta_1, \beta_2, \beta_3, \beta_4; z) \notin \Omega\]
whenever
\begin{align*}
\beta_1=q(\zeta) \quad \quad \beta_2=\frac{m \zeta q'(\zeta)+(\alpha+1)q(\zeta)}{\alpha+1},
\end{align*}
\begin{align*}
\RM\left(\tfrac{4\alpha(\alpha+1)\beta_3+8\alpha(\alpha+1)\beta_2-(4\alpha^2+8\alpha+1)\beta_1}{2(\alpha+1)(\beta_2-\beta_1)}
+1\right)\geq m \RM\left(\tfrac{\zeta q''(\zeta)}{q'(\zeta)}+1\right),
\end{align*}
and
\begin{align*}
\RM\left(\tfrac{8\alpha(\alpha^2-1)\beta_4-4\alpha(\alpha+1)(6\alpha-1)\beta_3+
2(\alpha+1)(36\alpha^2-12\alpha-1)\beta_2+(40\alpha^3+16\alpha^2-18\alpha-6)\beta_1}
{2(\alpha+1)(\beta_2-\beta_1)}\right) \geq  m^2 \RM\left(\tfrac{\zeta^2 q''(\zeta)}{q'(\zeta)}\right),
\end{align*}
where $ z \in \Delta$, $ \alpha > 1$, $\zeta \in \partial\Delta\setminus E(q)$ and $m \geq 2$.
\end{definition}

Our  first result give the sufficient conditions  for the inclusion of $\mathtt{S}_\alpha$ in the admissible class $\Phi_{\mathtt{g}}[\Omega, q]$. In  this purpose, let define $\mathtt{g}_\alpha(z): = z \mathtt{S}_\alpha(z)$. Then a calculation along with \eqref{eqn:Bessel-struve-rec}, yields the recurrence relation
\begin{align}\label{eqn:rec-g}
z\, \mathtt{g}'_{\alpha}(z)=2\alpha \mathtt{g}_{\alpha-1}(z)+(1- 2\alpha) \mathtt{g}_{\alpha}(z),
\end{align}
which play the main role in this article. Now we will state and proof our main results on differential subordination involving $\mathtt{S}_\alpha$.

\begin{theorem}\label{theorem-1}
Let $\phi \in \Phi_{\mathtt{g}}[\Omega, q]$. Suppose that  $q \in \mathcal{Q}_0$ satisfy the following inequalities:
\begin{align}\label{hyp-thm-1}
\RM\left(\frac{\zeta q''(\zeta)}{q'(\zeta)}\right)\geq 0, \quad \left|\frac{\mathtt{g}_\alpha(z)}{q'(\zeta)}\right|\leq m.
\end{align}
For all $z \in \Delta$ and $\alpha >1$, if
\begin{align}
\{\phi(\mathtt{g}_{\alpha+1}(z), \mathtt{g}_\alpha(z), \mathtt{g}_{\alpha-1}(z), \mathtt{g}_{\alpha-2}(z); z\}\subset \Omega,
\end{align}
 then
$\mathtt{g}_{\alpha+1}(z) \prec q(z)$.
\end{theorem}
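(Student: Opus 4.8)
The plan is to recast the hypothesis as a classical third-order differential subordination and then apply the Antonino--Miller machinery of Theorem \ref{thm-miller-anton}. Set $p(z):=\mathtt{g}_{\alpha+1}(z)=z\,\mathtt{S}_{\alpha+1}(z)$; since $\mathtt{S}_{\alpha+1}(0)=1$ we have $p(0)=0=q(0)$, so $p\in\mathcal{H}_0$ starts at the same value as $q\in\mathcal{Q}_0$, which makes the asserted conclusion $\mathtt{g}_{\alpha+1}\prec q$ the natural target. The whole argument then reduces to producing an admissible function $\psi\in\Psi_2[\Omega,q]$ (Definition \ref{def-miller-anton}) for which
\[
\psi\big(p(z),zp'(z),z^2p''(z),z^3p'''(z);z\big)=\phi\big(\mathtt{g}_{\alpha+1}(z),\mathtt{g}_{\alpha}(z),\mathtt{g}_{\alpha-1}(z),\mathtt{g}_{\alpha-2}(z);z\big),
\]
after which Theorem \ref{thm-miller-anton} gives $p\prec q$ at once.

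First I would eliminate the lower-index kernels in favour of $p$ and its derivatives by iterating the recurrence \eqref{eqn:rec-g}. Solving \eqref{eqn:rec-g} at index $\alpha+1$ for $\mathtt{g}_\alpha$ yields
\[
\mathtt{g}_\alpha(z)=\frac{zp'(z)+(2\alpha+1)p(z)}{2(\alpha+1)}.
\]
Differentiating this identity and substituting it back into \eqref{eqn:rec-g} at index $\alpha$ expresses $\mathtt{g}_{\alpha-1}$ as a linear combination of $p,\,zp',\,z^2p''$ over the denominator $4\alpha(\alpha+1)$; one more differentiation together with \eqref{eqn:rec-g} at index $\alpha-1$ expresses $\mathtt{g}_{\alpha-2}$ as a combination of $p,\,zp',\,z^2p'',\,z^3p'''$ over the denominator $8\alpha(\alpha^2-1)$. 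Feeding these four expressions into $\phi$ defines $\psi(r,s,t,u;z)$.

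The core of the proof is to check that this $\psi$ belongs to $\Psi_2[\Omega,q]$. Putting $r=q(\zeta)$ and $s=m\zeta q'(\zeta)$, the substitution gives $\beta_1=q(\zeta)$ and the boundary value $\beta_2$ read off from the displayed relation for $\mathtt{g}_\alpha$; the two Antonino--Miller requirements $\RM\left(\tfrac{t}{s}+1\right)\ge m\,\RM\left(\tfrac{\zeta q''(\zeta)}{q'(\zeta)}+1\right)$ and $\RM\left(\tfrac{u}{s}\right)\ge m^2\,\RM\left(\tfrac{\zeta^2 q'''(\zeta)}{q'(\zeta)}+1\right)$, once the ratios are rewritten via the linear relations for $\mathtt{g}_{\alpha-1}$ and $\mathtt{g}_{\alpha-2}$, are meant to transcribe precisely into the two inequalities listed in Definition \ref{def-phi-g}. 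Granting this, $\phi\in\Phi_{\mathtt{g}}[\Omega,q]$ forces $\psi(r,s,t,u;z)\notin\Omega$ over $\zeta\in\partial\Delta\setminus E(q)$ and $m\ge2$, i.e.\ $\psi\in\Psi_2[\Omega,q]$. The two side conditions \eqref{eqn:Antonino-Miller-thm-1} are then supplied by \eqref{hyp-thm-1}, and Theorem \ref{thm-miller-anton} delivers $p=\mathtt{g}_{\alpha+1}\prec q$.

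The main obstacle is exactly the coefficient bookkeeping underlying this matching: computing $\mathtt{g}_{\alpha-1}$ and especially $\mathtt{g}_{\alpha-2}$ forces repeated differentiation of the recurrence, which generates the cubic-in-$\alpha$ coefficients and the denominators $2(\alpha+1)$, $4\alpha(\alpha+1)$, $8\alpha(\alpha^2-1)$ that are visible in Definition \ref{def-phi-g}; one must then verify, after clearing these denominators, that the admissibility inequalities agree with the definition term by term. The hypothesis $\alpha>1$ is precisely what keeps each of these denominators nonzero, so that \eqref{eqn:rec-g} can be inverted at every stage. A final technical point is that $p=\mathtt{g}_{\alpha+1}$ lies a priori only in $\mathcal{H}[0,1]$ whereas Theorem \ref{thm-miller-anton} is stated for $\mathcal{H}[0,n]$ with $n\ge2$; this is absorbed by building the admissible class with $m\ge2$ as in Definition \ref{def-phi-g}.
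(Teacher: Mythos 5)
Your proposal follows essentially the same route as the paper's own proof: set $p=\mathtt{g}_{\alpha+1}$, iterate the recurrence \eqref{eqn:rec-g} to express $\mathtt{g}_{\alpha},\mathtt{g}_{\alpha-1},\mathtt{g}_{\alpha-2}$ as linear combinations of $p,\,zp',\,z^2p'',\,z^3p'''$ with exactly the denominators $2(\alpha+1)$, $4\alpha(\alpha+1)$, $8\alpha(\alpha^2-1)$ you list, define $\psi$ by composing $\phi$ with these substitutions, identify the admissibility condition of Definition \ref{def-phi-g} with that of $\psi\in\Psi_{2}[\Omega,q]$ from Definition \ref{def-miller-anton}, and invoke Theorem \ref{thm-miller-anton} under the hypotheses \eqref{hyp-thm-1}. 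The only difference is one of detail, not of method: the paper explicitly inverts the linear relations to write $s,t,u$ in terms of $\beta_1,\dots,\beta_4$ (the bookkeeping you grant rather than carry out), which is routine.
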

\begin{proof}
Define the analytic function $p$ in $\Delta$ as
\begin{align}\label{eqn-1-Thm1}
p(z):= \mathtt{g}_{\alpha+1}(z),  \quad \alpha>1.
\end{align}
Differentiate \eqref{eqn-1-Thm1} with respect to $z$. Then an application of \eqref{eqn:rec-g} for $\alpha+1$ yields
\begin{align}\label{eqn-2-Thm1}
\mathtt{g}_\alpha(z)=\frac{zp'(z)+(2\alpha+1)p(z)}{2(\alpha+1)}.
\end{align}
Differentiate both side of \eqref{eqn-2-Thm1} with respect to $z$ and then multiply by $z$ gives
\begin{align}\label{eqn-2-Thm11}
z\mathtt{g}'_\alpha(z)=\frac{z^2p''(z)+(2\alpha+1)zp'(z)}{2(\alpha+1)}.
\end{align}
Again use of \eqref{eqn:rec-g} implies
\begin{align}\label{eqn-3-Thm1}
\mathtt{g}_{\alpha-1}(z)=\frac{z^2p''(z)+4 \alpha zp'(z)+(4\alpha^2-1)p(z)}{4\alpha(\alpha+1)}.
\end{align}
Similarly, it can be shown that
\begin{align}\label{eqn-4-Thm1}
\mathtt{g}_{\alpha-2}(z)=\tfrac{z^3 p'''(z)+(6\alpha-1) z^2p''(z)+(12 \alpha^2 -8 \alpha-1) zp'(z)+(4 \alpha^2-1)(2\alpha-3)p(z)}{8\alpha(\alpha^2-1)}.
\end{align}

Now consider the four transformation $\beta_i: \mathbb{C}^4 \mapsto \mathbb{C}$, $i=1,2,3,4$, as follows:
\begin{align*}
(i)\; &\beta_1(r, s, t, u)=r\\
(ii)\;&\beta_2(r, s, t, u)=\frac{s+(\alpha+1)r}{\alpha+1}\\
(iii)\;&\beta_3(r, s, t, u)=\frac{t+4\alpha s+(4\alpha^2-1)r}{4\alpha(\alpha+1)}\\
(iv)\;&\beta_4(r, s, t, u) = \frac{u+(6\alpha-1)t+(12\alpha^2-8\alpha-1)s+(2\alpha-3)(4\alpha^2-1)r}{8\alpha(\alpha^2-1)}\hspace{1in}.
\end{align*}
Define $\psi:\mathbb{C}^4 \to \mathbb{C}$ as
\begin{align}\label{psi-phi}
\psi(r, s, t, u; z):=\phi(\beta_1, \beta_2, \beta_3, \beta_4; z).
\end{align}
Then clearly
 \[\psi(p(z), zp'(z), z^2p''(z), z^3 p'''(z); z)
=\phi(\mathtt{g}_{\alpha+1}(z), \mathtt{g}_{\alpha}(z), \mathtt{g}_{\alpha-1}(z), \mathtt{g}_{\alpha-2}(z); z).\]
From $(i)$-$(iv)$, it follows that
\begin{align*}
s&=2(\alpha+1)(\beta_2-\beta_1)\\
t&=4\alpha(\alpha+1)\beta_3+8\alpha(\alpha+1)\beta_2-(4\alpha^2+8\alpha+1)\beta_1\\
u&=8\alpha(\alpha^2-1)\beta_4-4\alpha(\alpha+1)(6\alpha-1)\beta_3+
2(\alpha+1)(36\alpha^2-12\alpha-1)\beta_2\\
&\quad \quad+(40\alpha^3+16\alpha^2-18\alpha-6)\beta_1.
\end{align*}
Thus the admissibility for $\phi \in \Psi_g[\Omega,q]$ as stated in the  Definition \ref{def-phi-g} is
equivalent to the  admissible condition  for $\psi \in \Psi_n[\Omega,q]$, $n=2$ as given in Definition \ref{def-miller-anton}.
It is evident that the result follows from Theorem $\ref{thm-miller-anton}$  provided the hypothesis \eqref{hyp-thm-1} hold.
\end{proof}

Now consider the case,  when $\Omega \neq \mathbb{C}$ is a simple connected domain, then for  some conformal mapping $h$ of $ \Delta$ to $\Omega$, we have $\Omega=h(\Delta)$.
In this case the class $\Psi_g[h(\Delta), q]$ is denoted as $\Psi_g[h, q]$  and the following result is an immediate consequence of Theorem $\ref{theorem-1}$.

\begin{theorem}\label{theorem-2}
Let $\phi \in \Phi_{\mathtt{g}}[\Omega, q]$. Suppose that  $q \in \mathcal{Q}_0$ satisfy the hypothesis \eqref{hyp-thm-1}.  For all $z \in \Delta$ and $\alpha >1$, if
\begin{align}\label{hyp2-thm-3}
\{\phi(\mathtt{g}_{\alpha+1}(z), \mathtt{g}_\alpha(z), \mathtt{g}_{\alpha-1}(z), \mathtt{g}_{\alpha-2}(z); z\}\prec h(z),
\end{align}
 then $\mathtt{g}_{\alpha+1}(z) \prec q(z)$.
\end{theorem}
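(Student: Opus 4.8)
The plan is to reduce Theorem \ref{theorem-2} directly to Theorem \ref{theorem-1} by converting the subordination in \eqref{hyp2-thm-3} into the corresponding set inclusion. The decisive ingredient is the standing assumption introduced just before the statement: $\Omega$ is a simply connected domain with $\Omega = h(\Delta)$ for a conformal map $h$ of $\Delta$ onto $\Omega$, so that $h$ is in particular univalent in $\Delta$.

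First I would record the elementary passage from subordination to inclusion. If $F$ is analytic in $\Delta$ and $F \prec h$, then by definition there is an analytic self-map $w$ of $\Delta$ with $w(0)=0$ and $F = h \circ w$; since $w(\Delta) \subseteq \Delta$, this gives $F(\Delta) = h(w(\Delta)) \subseteq h(\Delta) = \Omega$. Applying this to the analytic function $F(z) := \phi(\mathtt{g}_{\alpha+1}(z), \mathtt{g}_\alpha(z), \mathtt{g}_{\alpha-1}(z), \mathtt{g}_{\alpha-2}(z); z)$, the hypothesis \eqref{hyp2-thm-3} yields
\[
\{\phi(\mathtt{g}_{\alpha+1}(z), \mathtt{g}_\alpha(z), \mathtt{g}_{\alpha-1}(z), \mathtt{g}_{\alpha-2}(z); z) : z \in \Delta\} \subseteq h(\Delta) = \Omega.
\]
At this point $\phi \in \Phi_{\mathtt{g}}[\Omega, q] = \Phi_{\mathtt{g}}[h, q]$ and $q \in \mathcal{Q}_0$ satisfies \eqref{hyp-thm-1}, so every hypothesis of Theorem \ref{theorem-1} holds for this same set $\Omega = h(\Delta)$. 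Invoking Theorem \ref{theorem-1} then delivers precisely the desired conclusion $\mathtt{g}_{\alpha+1}(z) \prec q(z)$.

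I do not expect a genuine obstacle here, since the substantive work has already been done in Theorem \ref{theorem-1}: the construction of the admissible $\psi$ from $\phi$ through the transformations $\beta_i$ and the recurrence \eqref{eqn:rec-g}, and the verification that the admissibility condition for $\phi \in \Phi_{\mathtt{g}}[\Omega,q]$ matches the third-order admissibility condition for $\psi$ in Definition \ref{def-miller-anton}. The only point deserving a moment of care is that the equivalence between the subordination statement and the inclusion statement relies on the univalence of $h$, which makes the identification $\Omega = h(\Delta)$ exact; this is exactly what the conformality assumption on $h$ guarantees, so the argument closes at once.
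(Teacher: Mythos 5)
Your proposal is correct and takes essentially the same route as the paper, which gives no separate proof at all: it simply declares Theorem \ref{theorem-2} to be an immediate consequence of Theorem \ref{theorem-1} under the standing identification $\Omega = h(\Delta)$ with $h$ a conformal map of $\Delta$ onto the simply connected domain $\Omega$. Your argument---passing from the subordination \eqref{hyp2-thm-3} to the inclusion of the image of $z \mapsto \phi(\mathtt{g}_{\alpha+1}(z), \mathtt{g}_\alpha(z), \mathtt{g}_{\alpha-1}(z), \mathtt{g}_{\alpha-2}(z); z)$ in $h(\Delta) = \Omega$, and then invoking Theorem \ref{theorem-1}---is precisely the reduction the paper leaves implicit.
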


If the behaviour of $q$ on $\partial \Delta$ is not known, then Thorem \ref{theorem-1} can be extended as in the following result.

\begin{theorem}\label{theorem-3}
Let $\Omega \in \mathbb{C}$ and $q$ be univalent in $\Delta$ with $q(0)=0$ .  Suppose that   $\phi \in \Phi_{\mathtt{g}}[\Omega, q_r]$ for some $r \in (0,1)$, where $q_r(z)=q(rz)$
satisfy
\begin{align}\label{hyp-thm-3}
\RM\left(\frac{\zeta q_r''(\zeta)}{q_r'(\zeta)}\right)\geq 0, \quad \left|\frac{\mathtt{g}_\alpha(\zeta)}{q_r'(\zeta)}\right|\leq m.
\end{align}
For all $z \in \Delta$ and $\alpha >1$, if
\begin{align}
\phi(\mathtt{g}_{\alpha+1}(z), \mathtt{g}_\alpha(z), \mathtt{g}_{\alpha-1}(z), \mathtt{g}_{\alpha-2}(z); z) \in \Omega
\end{align}
 then $\mathtt{g}_{\alpha+1}(z) \prec q(z)$.
\end{theorem}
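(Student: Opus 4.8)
The plan is to remove the boundary hypothesis on $q$ by applying Theorem \ref{theorem-1} not to $q$ itself but to its dilation, and then to recover subordination by $q$ through transitivity. Fix the value $r \in (0,1)$ supplied by the hypothesis and set $q_r(z) = q(rz)$. Since $q$ is univalent in $\Delta$, the function $q_r$ is analytic and univalent on the closed disc $\overline{\Delta}$ (in fact on the larger disc $|z| < 1/r$); consequently $E(q_r) = \emptyset$, the quantity $\min_{\zeta \in \overline{\Delta}} |q_r'(\zeta)|$ is strictly positive, and $q_r(0) = q(0) = 0$. Hence $q_r \in \mathcal{Q}_0 \cap \mathcal{H}_0$, so $q_r$ is a legitimate dominant for Theorem \ref{theorem-1} even though $q$ alone may fail to lie in $\mathcal{Q}_0$.

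With this dilation in hand, I would verify that every hypothesis of Theorem \ref{theorem-1} holds with $q_r$ in the role of $q$. By assumption $\phi \in \Phi_{\mathtt{g}}[\Omega, q_r]$, and the conditions \eqref{hyp-thm-3} are precisely \eqref{hyp-thm-1} written for the dominant $q_r$. Since moreover
\[
\phi(\mathtt{g}_{\alpha+1}(z), \mathtt{g}_\alpha(z), \mathtt{g}_{\alpha-1}(z), \mathtt{g}_{\alpha-2}(z); z) \in \Omega
\]
for all $z \in \Delta$, Theorem \ref{theorem-1} applies verbatim and yields $\mathtt{g}_{\alpha+1}(z) \prec q_r(z)$.

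It remains to pass from $q_r$ back to $q$. The map $w_0(z) = rz$ is an analytic self-map of $\Delta$ with $w_0(0) = 0$, and $q(w_0(z)) = q(rz) = q_r(z)$, so by definition $q_r \prec q$. Transitivity of subordination then gives $\mathtt{g}_{\alpha+1}(z) \prec q(z)$, as required. The one genuinely substantive point, and the reason the theorem is stated separately, is the verification that $q_r \in \mathcal{Q}_0$: the dilation is exactly the device that supplies the boundary regularity (univalence up to $\partial\Delta$ together with a positive lower bound on $|q_r'|$) which $q$ itself is not assumed to possess, and which Theorem \ref{theorem-1} requires. This is the standard reduction used in the Miller--Mocanu framework \cite{Miller-Mocanu-book,Antonino-Miller}.
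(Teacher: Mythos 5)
Your proof is correct and follows exactly the paper's own argument: apply Theorem \ref{theorem-1} with the dilation $q_r$ as dominant to obtain $\mathtt{g}_{\alpha+1}(z) \prec q_r(z)$, then conclude via $q_r(z) \prec q(z)$ and transitivity of subordination. The only difference is that you spell out the verification that $q_r \in \mathcal{Q}_0 \cap \mathcal{H}_0$, which the paper leaves implicit.
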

\begin{proof}
It follows from Theorem $\ref{theorem-1}$ that $\mathtt{g}_{\alpha+1}(z) \prec q_r(z)$. Now the result can be deduced from the fact that $q_r(z) \prec q(z)$ for all fixed $r \in (0,1)$ and $z \in \Delta$.
\end{proof}

Our next result yields the best dominant of the differential subordination  \eqref{hyp2-thm-3}.
\begin{theorem}
Let $h$ be  univalent in $\Delta$, and let $\phi: \mathbb{C}^4 \times \Delta \to \mathbb{C}$ and $\Psi$ be given by  \eqref{psi-phi}. Suppose that
the differential equation
\begin{align}\label{hyp-thm-4}
\Psi( q(z), z q'(z), z^2 q''(z), z^3 q''(z); z)=h(z),
\end{align}
has a solution $q(z)$ with $q(0)=0$ and satisfies the condition \eqref{hyp-thm-1}.\\
 If $\phi \in \Phi_g[h, q_r]$ and   $\phi( g_{\alpha+1}(z), g_\alpha(z), g_{\alpha-1}(z), g_{\alpha-2}(z); z)$ is analytic
in $\Delta$,  then  \eqref{hyp2-thm-3} implies that  $\mathtt{g}_{\alpha+1}(z) \prec q(z)$, and $q$ is the best dominant.
\end{theorem}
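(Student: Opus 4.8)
The plan is to follow the standard two-stage argument for best-dominant results in the Miller--Mocanu / Antonino--Miller theory: first establish that $q$ is a dominant, and then upgrade this to $q$ being the \emph{best} dominant by exhibiting $q$ itself as a solution of the differential subordination.

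First I would show that $q$ is a dominant. By hypothesis $\phi \in \Phi_{\mathtt{g}}[h, q_r]$ for some $r \in (0,1)$, and $q_r$ inherits the requirements of \eqref{hyp-thm-1}; together with the subordination \eqref{hyp2-thm-3}, the $h$-version of Theorem \ref{theorem-3} yields $\mathtt{g}_{\alpha+1}(z) \prec q_r(z)$. Since $q_r(z)=q(rz) \prec q(z)$ for each fixed $r \in (0,1)$, we obtain $\mathtt{g}_{\alpha+1}(z) \prec q(z)$ for every $\mathtt{g}_{\alpha+1}$ arising from a $\phi$ satisfying \eqref{hyp2-thm-3}. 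Thus $q$ is a dominant of the differential subordination, exactly as in Theorem \ref{theorem-2}.

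Next I would show $q$ is the best dominant. The key observation is that $q$ solves the differential equation \eqref{hyp-thm-4}, so $\Psi(q(z), zq'(z), z^2q''(z), z^3 q'''(z);z)=h(z)$. By the defining relation \eqref{psi-phi}, $\Psi=\phi\circ(\beta_1,\beta_2,\beta_3,\beta_4)$, so feeding $q$ into the transformations $(i)$--$(iv)$ from the proof of Theorem \ref{theorem-1} shows that $q$ itself produces the value $h(z)$ under $\phi$. Hence $q$ satisfies $\phi(\cdots;z)=h(z)\prec h(z)$, i.e. $q$ is a solution of the differential subordination \eqref{hyp2-thm-3}. By the definition of a dominant, every dominant $\tilde q$ satisfies $p \prec \tilde q$ for all solutions $p$; applying this to the solution $q$ gives $q \prec \tilde q$. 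Combined with the first stage, $q$ is a dominant satisfying $q \prec \tilde q$ for every dominant $\tilde q$, which is precisely the definition of the best dominant.

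The main obstacle will be the bookkeeping in the second stage: one must verify carefully that a solution $q$ of the scalar equation \eqref{hyp-thm-4} really corresponds, through the change of variables \eqref{psi-phi} and the transformations $\beta_i$, to a genuine solution of \eqref{hyp2-thm-3}. This amounts to checking that the arguments $zq'$, $z^2q''$, $z^3q'''$ feed correctly into $\beta_2,\beta_3,\beta_4$ so that the composed quantity is exactly $\phi$ evaluated along the $q$-chain. (I also note that the final argument in \eqref{hyp-thm-4} should read $z^3 q'''(z)$ rather than $z^2 q''(z)$; this is evidently a typographical slip.) Once this identification is secured, the assumed analyticity of $\phi(\mathtt{g}_{\alpha+1},\dots;z)$ guarantees that the subordination is meaningful, and the conclusion follows from the abstract definitions without further computation.
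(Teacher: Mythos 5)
Your proposal is correct and follows essentially the same two-stage argument as the paper's own proof: first $q$ is a dominant (via the earlier subordination theorems), then $q$, being a solution of the differential equation \eqref{hyp-thm-4}, is itself a solution of the subordination \eqref{hyp2-thm-3}, hence is dominated by every dominant and is therefore the best dominant. Your write-up is in fact slightly more careful than the paper's terse version—routing the dominance step through Theorem \ref{theorem-3} with $q_r$ (matching the hypothesis $\phi\in\Phi_{\mathtt{g}}[h,q_r]$) and flagging the typographical slip $z^3q''$ for $z^3q'''$—but the underlying route is identical.
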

\begin{proof}
From Theorem $\ref{theorem-1}$, it is evident that $q$ is a dominant of \eqref{hyp2-thm-3}.  Since $q$ satisfies  \eqref{hyp-thm-4}, it is also a  solution of \eqref{hyp2-thm-3} and therefore $q$ is dominated by all dominant.  This implies $q$ is the best dominant.
\end{proof}

\end{document}